\documentclass[11pt]{article}

\usepackage[utf8]{inputenc}
\usepackage[T1]{fontenc}
\usepackage{amsmath,amssymb,amsthm}
\usepackage{mathrsfs}
\usepackage{geometry}
\usepackage{enumitem}
\usepackage{hyperref}
\usepackage{xcolor}
\colorlet{extrablue}{blue!60!black}
\newcommand{\extra}[1]{{\color{extrablue}#1}}
\usepackage{comment}
\theoremstyle{plain}
\newtheorem{theorem}{Theorem}[section]
\newtheorem{corollary}[theorem]{Corollary}
\newtheorem{proposition}[theorem]{Proposition}
\newtheorem{lemma}[theorem]{Lemma}
\theoremstyle{definition}
\newtheorem{definition}[theorem]{Definition}
\theoremstyle{remark}
\newtheorem{remark}[theorem]{Remark}

\newcommand{\OK}{\mathcal{O}_K}
\newcommand{\bb}{\mathfrak{b}}
\newcommand{\bB}{\mathfrak{B}}
\newcommand{\htop}{h_{\mathrm{top}}}
\newcommand{\Z}{\mathbb{Z}}
\newcommand{\Q}{\mathbb{Q}}
\newcommand{\Ftheta}{\mathscr{F}_\theta}

\title{Intrinsic ergodicity for $\bB$-free integers in number fields}
\author{Francesco Cellarosi}
\date{}

\begin{document}
\maketitle
\begin{abstract}
Let $K$ be a number field with ring of integers $\OK$, and let $\bB$ be an
Erd\H{o}s family of ideals in $\OK$. We prove that the associated $\bB$-free
subshift $(X_{\bB},(S_a)_{a\in\OK})$ is intrinsically ergodic: it carries a unique
measure of maximal entropy, which we identify explicitly as a relatively
independent extension of the Haar rotation on $\prod_{\bb\in\bB}\OK/\bb$. This is
the first proof of intrinsic ergodicity for $\bB$-free systems beyond dimension
one, and relies on the work of Ara\'ujo--Dymek--Ku\l aga-Przymus. Via their reductions, we  also settle
the $k$-free and $\bB$-free lattice-point cases and the $k$-free number-field case.
We give two independent proofs of the underlying rigidity statement: one
through a single-site relative-entropy argument, and one through an
exact-tiling realisation of Peckner's induce-and-split scheme.
\end{abstract}

\section{Introduction}\label{sec:intro}

\subsection{Sarnak's program and the squarefree subshift}

The M\"obius function $\mu$ and its square $\mu^2$, the indicator of the
squarefree integers, have been the focus of Sarnak's program~\cite{Sa1,Sa2}. Extending $\mu^2$
symmetrically and taking the orbit closure of the resulting point under the left
shift produces the \emph{squarefree subshift} $(X_{\mu^2},S)\subseteq\{0,1\}^\Z$,
and Sarnak proposed to establish for it a list of structural properties:
genericity of $\mu^2$ (the existence of all pattern frequencies) for a
shift-invariant measure of zero Kolmogorov entropy, nowadays called the
\emph{Mirsky measure} of the subshift; a formula for the topological entropy; a description of the
subshift in terms of admissible sequences; proximality with a unique minimal
set; and a non-trivial joining with a rotation despite a trivial maximal
equicontinuous factor.

These were proved by several authors. Cellarosi and Sinai~\cite{CS} established
the genericity statement, showing moreover that the subshift equipped with its
Mirsky measure is isomorphic to a rotation on
the compact group $\prod_p\Z/p^2\Z$ (in particular, of zero entropy). The
topological entropy was found to equal $6/\pi^2$, and the admissibility
description was given, by Peckner~\cite{Pe}, while the proximality and joining
statements were settled by Huck and Baake~\cite{HB}.

Crucially for the present paper, Peckner~\cite{Pe} proved a property that lies
beyond this list: the squarefree subshift is
\emph{intrinsically ergodic}, i.e.\ it carries a \emph{unique} measure of
maximal entropy. This is a genuine rigidity statement. The squarefree subshift
is hereditary and of positive topological entropy, hence supports a large
simplex of invariant measures; the Mirsky measure, which makes the system isomorphoic to a
rotation, has zero entropy and is therefore very far from maximal.
Intrinsic ergodicity asserts that, among all invariant measures, exactly one
attains the entropy maximum, and so singles out a canonical ``most chaotic''
measure for the system.

\subsection{$\bB$-free integers in number fields and  $\bB$-free systems}

The squarefree integers are the prototype of a \emph{$\bB$-free set}. Given
$\bB\subseteq\Z$, one removes from $\Z$ all multiples of elements of $\bB$; under the
\emph{Erd\H{o}s} hypothesis ($\bB$ infinite, pairwise coprime, and
$\sum_{b\in\bB}1/b<\infty$) the resulting set $F_{\bB}$ has a density and the
indicator $\eta=\mathbf 1_{F_{\bB}}$ generates a well-behaved subshift $(X_{\bB},S)$. In
this one-dimensional setting, El~Abdalaoui, Lema\'nczyk and de~la~Rue~\cite{ALR}
proved the analogues of the genericity, entropy, and admissibility statements; Ku\l aga-Przymus, Lema\'nczyk and
Weiss~\cite{KLW} proved intrinsic ergodicity and gave a full description of the
invariant measures; and Dymek, Ku\l aga-Przymus and Sell~\cite{DKS} revisited the
theory and extended intrinsic ergodicity to \emph{every} $\bB\subseteq\Z$, well
beyond the Erd\H{o}s case. The broader topological dynamics of $\bB$-free systems
(proximality, entropy, admissibility, equicontinuous factors) is developed
in~\cite{DKKL,Kw} and the references therein.

Several multidimensional and arithmetic generalisations followed. Pleasants and
Huck~\cite{PH} and Huck and Baake~\cite{HB} studied \emph{$k$-free lattice
points} in $\Z^d$; Baake and Huck~\cite{BH} announced the corresponding theory
for \emph{$\bB$-free lattice points}; and Cellarosi and Vinogradov~\cite{CV}
treated \emph{$k$-free integers in number fields} (with further recent developmen in the quadratic case by Baake-Bustos-Nickel~\cite{BBN} and Baake-Luz-Schindler~\cite{BLS}.

Recently, Ara\'ujo,
Dymek and Ku\l aga-Przymus~\cite{ADK} unified all of these under the heading of
\emph{$\bB$-free integers in number fields}, and Ara\'ujo~\cite{Ar1,Ar2} has
extended the Mirsky-measure side of the theory further still, to arbitrary
Erd\H{o}s sieves over \'etale $\Q$-algebras.
Our work shares the set-up of~\cite{ADK}, which we now describe. 

Let $K$ be a number field of degree $d=[K:\Q]$ with ring of integers $\OK$. The space $\{0,1\}^{\OK}$ (equipped with the product topology) is compact and
metrisable. For $x\in\{0,1\}^{\OK}$ we write $x_a:=x(a)$ for its coordinates and
put $\operatorname{supp}x:=\{a\in\OK:x_a=1\}$. The group $\OK\cong \Z^d$ acts on the compact topological space $\{0,1\}^{\OK}$ via shifts as  
\begin{align}\label{eq:OK-action}
(S_ax)(b):=x(a+b),\qquad a,b\in\OK,
\end{align}
so that each $S_a$ is a homeomorphism and $S_a S_{a'}=S_{a+a'}$. 
For a
nonzero ideal $\bb\subseteq\OK$ the quotient $\OK/\bb$ is finite, and
$N(\bb):=|\OK/\bb|$ denotes the norm of $\bb$. Two ideals
$\bb,\mathfrak c$ are \emph{coprime} if $\bb+\mathfrak c=\OK$.

\begin{definition}[Erd\H{o}s family]\label{def:erdos}
A family $\bB=\{\bb_\ell:\ell\ge1\}$ of proper nonzero ideals of $\OK$ is
\emph{Erd\H{o}s} if and only if it is infinite, the $\bb_\ell$'s are pairwise coprime, and
$\sum_{\ell\ge1}1/N(\bb_\ell)<\infty$.
\end{definition}
Given a family $\bB$ of ideals of $\OK$, the set of $\bB$-free integers is $F_{\bB}=\OK\setminus\bigcup_{\bb\in\bB}\bb$, and $\eta=\mathbf 1_{F_{\bB}}\in\{0,1\}^{\OK}$ denotes its indicator.
The orbit
closure of $\eta$ in $\{0,1\}^{\OK}$ is $X_\eta:=\overline{\{S_a\eta:a\in\OK\}}$.
Henceforth, we assume that $\bB$ is an Erd\H{o}s family of ideals. In this context, the authors of \cite{ADK} 
proved full analogues of the properties in  Sarnak's program; in particular $\eta$ is
generic for its Mirsky measure $\nu_\eta$ (Theorem A in \cite{ADK}). 
For each $\ell\geq1$, we equip the finite abelian group $\OK/\bb_\ell$
with its normalised counting measure. We set
\begin{equation}\label{eq:def-GP}
G:=\prod_{\ell\ge1}\OK/\bb_\ell,\qquad P:=\bigotimes_{\ell\ge1}(\text{normalised counting measure on }\OK/\bb_\ell),
\end{equation}
so that $G$ is a compact metrisable abelian group and $P$ is its Haar measure.
Let $\pi_\ell\colon\OK\to\OK/\bb_\ell$ be the quotient homomorphism. 
The group $\OK$ acts on $G$ by the rotation
\begin{equation}\label{eq:def-rotation}
T_a(g):=
(g_\ell+\pi_\ell(a))_{\ell\geq1},\qquad a\in\OK,\ g=(g_\ell)_{\ell\geq1}\in G.
\end{equation}
For brevity we abbreviate $g_\ell+\pi_\ell(a)$ as $g_\ell+a\in\OK/\bb_\ell$. We also use the shorthands 
\begin{equation}\label{eq:S-and-T}
S=(S_a)_{a\in\OK}\quad \mbox{and}\quad T=(T_a)_{a\in\OK},
\end{equation}
to refer to the actions \eqref{eq:OK-action} and \eqref{eq:def-rotation}, respectively. 
The measure-theoretic subshift $(X_\eta,\nu_\eta, S)$ 
is isomorphic to the Haar rotation $(G,P,T)$, and hence has zero Kolmogorov-Sinai entropy. 
On the other hand, the topological 
entropy of $(X_\eta, S)$ is positive, equal to $\prod_{\ell\geq 1}\big(1-1/N(\bb_\ell)\big)$ (Theorem B in \cite{ADK}). Next, we outline the characterization of $X_\eta$ in terms of admissible sequences from \cite{ADK}. 
For $A\subseteq\OK$
and a nonzero ideal $\bb$ write
\begin{equation}\label{eq:def-classcount}
|A/\bb|:=\big|\{a+\bb:a\in A\}\big|\in\{0,1,\dots,N(\bb)\}
\end{equation}
for the number of residue classes modulo $\bb$ met by $A$. 
\begin{definition}\label{def:adm}
A configuration $x\in\{0,1\}^{\OK}$ is \emph{$\bB$-admissible} if
$|\operatorname{supp}x/\bb_\ell|<N(\bb_\ell)$ for every $\ell\ge1$. That is, the
support of $x$ omits at least one residue class modulo each $\bb_\ell$. The set $X_{\bB}$ of
all $\bB$-admissible configurations is a closed, shift-invariant subset of
$\{0,1\}^{\OK}$, which we refer to as the \emph{$\bB$-free subshift}.
\end{definition}
 The
subshift $X_{\bB}$ is \emph{hereditary}: if $x\in X_{\bB}$ and $x'\le x$ coordinatewise,
then $x'\in X_{\bB}$. 
Theorem C in \cite{ADK} states that $X_\eta=X_{\bB}$, and justifies why we treat the subshifts  $(X_\eta,S)$ and  $(X_{\bB},S)$ interchangeably, as in the classical squarefree case.


We stress once more that the framework just described subsumes all the earlier settings:
the classical case is $K=\Q$, and the $k$-free, lattice, and number-field cases
arise as special or conjugate instances, as discussed in \cite{ADK}.

\subsection{The main theorem: intrinsic ergodicity for the $\bB$-free subshift}\label{ss:mainthm}
In every one of the multidimensional and number-field settings above, the
structural properties in Sarnak's program above are now known, but intrinsic ergodicity is not. The
uniqueness of the measure of maximal entropy has been established only in
dimension one~\cite{DKS,KLW,Pe}. Our goal is to provide intrinsic ergodicity for the general $\bB$-free subshift described above. Above dimension one, this is a new result, see \cite[Remark 4.18]{Ar2}. Intrinsic ergodicity is not a formality (hereditary subshifts need not be intrinsically ergodic in general
\cite{KLW,Kw}) so the property must be earned from the arithmetic structure. The purpose of this work is to show intrinsic ergodicity. Before stating our main theorem precisely, we introduce the shift-invariant measure $\kappa$ on $X_\bB$.

Let us define the \emph{coding map} $\varphi\colon G\to\{0,1\}^{\OK}$ by
\begin{equation}\label{eq:def-phi}
\varphi(g)(a)=1\iff g_\ell+a\neq0\ \text{in}\ \OK/\bb_\ell\ \text{ for every }\ell\ge1 .
\end{equation}
Note that $\varphi(0)=\eta$ is the configuration in $\{0,1\}^{\OK}$ that forbids the class $0$ modulo  $\bb_\ell$ for every $\ell\geq1$. When dealing with the coding map and related concepts, we refer to $g\in G$ as a \emph{phase}, since $\varphi(g)$ shifts each periodic `layer' $\OK/\bb_\ell$ independently by $g_\ell$. Clearly, $\varphi(g)\in X_{\bB}$ for all phases $g\in G$.
We denote by  
\begin{equation}\label{eq:def-Ag}
A_g:=\operatorname{supp}\varphi(g)=\{a\in\OK:a\not\equiv-g_\ell\ (\mathrm{mod}\ \bb_\ell)\ \text{for every }\ell\ge1\}
\end{equation}
the support of $\varphi(g)$, i.e. the set of \emph{allowed positions} for the phase $g\in G$.
We also point out that $\varphi$ is Borel\footnote{To see that $\varphi$ is Borel,  consider the closed set
$C:=\{g\in G:g_\ell\neq0\ \text{for every }\ell\ge1\}$. Then $\varphi(g)(a)=\mathbf 1_C(T_ag)$ (cf.\ Remark~2.7 of~\cite{ADK}), so
every coordinate of $\varphi$ is Borel.}. 
The push-forward
$\nu_\eta:=\varphi_*P$ is the aforementioned \emph{Mirsky measure} on $X_{\bB}$.
Let $\beta$ be the uniform Bernoulli measure on
$\{0,1\}^{\OK}$ (independent fair coin flips at each integer in $\OK$) and define 
\begin{equation}\label{eq:def-Phi}
\Phi\colon G\times\{0,1\}^{\OK}\to X_{\bB},\qquad\Phi(g,\omega)=\varphi(g)\cdot\omega,
\end{equation} where ``$\cdot$'' denotes the coordinatewise product. Finally, set
\begin{equation}\label{eq:def-kappa}
\kappa:=\Phi_*(P\otimes\beta).
\end{equation}

In the statement below, $\htop$ denotes the topological entropy of the
$\OK$-action and $h_\nu(S)$ the measure-theoretic (Kolmogorov--Sinai) entropy of an
invariant measure $\nu$; both notions, and the associated relative entropy used
throughout the paper, are recalled in Section~\ref{ss:entropy}.

\begin{theorem}[The $\bB$-free subshift is intrinsically ergodic]\label{thm:main}
Let $K$ be a number field with ring of integers $\OK$, and let $\bB$ be an
Erd\H{o}s family of ideals in $\OK$. Then the $\bB$-free subshift
$(X_{\bB},S)$ is intrinsically ergodic: it has a unique measure of
maximal entropy, namely $\kappa$. Equivalently, conditioned on a Haar-random
phase $g\in G$, the unique  measure of maximal entropy places independent fair coins on the
allowed positions $A_g$ and zeros elsewhere. In particular
\begin{equation}\label{eq:entropy-value}
h_\kappa(S)=\htop(X_{\bB})=\prod_{\ell
\geq 1}\Big(1-\frac{1}{N(\bb_\ell)}\Big),
\end{equation}
and $\kappa$ is mutually singular with the zero-entropy Mirsky measure
$\nu_\eta$.
\end{theorem}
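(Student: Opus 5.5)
We will follow the Kułaga-Przymus--Lemańczyk--Weiss strategy, adapted to $\OK$-actions through the tools of~\cite{ADK}. There are four steps: compute $h_\kappa(S)$; show every invariant $\nu$ on $X_{\bB}$ lifts to a joining $\rho$ of $(G,P,T)$ with $\nu$; bound $h_\nu$ relative to $G$ by a single-site quantity; and show equality forces $\nu=\kappa$. Mutual singularity with $\nu_\eta$ is immediate once the entropy value is known: $h_{\nu_\eta}=0<h_\kappa$, and both measures are ergodic. Ergodicity of $\kappa$ follows from it being a relatively independent (Bernoulli) extension of the ergodic rotation.

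\emph{Entropy of $\kappa$.} The factor map $(g,\omega)\mapsto g$ intertwines $(X_{\bB}\times G,\tilde\kappa)$ with $(G,P,T)$, where $\tilde\kappa$ is the lift $(\Phi,\mathrm{pr}_G)_*(P\otimes\beta)$. Since $h_P(T)=0$, Abramov--Rokhlin for amenable $\OK\cong\Z^d$ gives $h_{\tilde\kappa}=h_{\tilde\kappa}(\cdot\mid G)$. Given $g$, the fibre measure is Bernoulli$(1/2)$ on $A_g$, so the conditional entropy equals the density of $A_g$ times $\log 2$. This density is $\prod_\ell(1-1/N(\bb_\ell))$ for $P$-a.e. $g$, by the ergodic theorem applied to $\mathbf 1_C$. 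The projection $X_{\bB}\times G\to X_{\bB}$ loses no entropy, since $h_{\tilde\kappa}\le h_\kappa+h_P=h_\kappa$. Combined with Theorem~B of~\cite{ADK}, this gives \eqref{eq:entropy-value}, taking logarithms in base $2$.

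\emph{Upper bound and rigidity.} Let $\nu$ be invariant on $X_{\bB}$. Following~\cite{KLW,DKS}, we build an equivariant measurable ``missing residue'' assignment. Since $x\in X_{\bB}$, for each $\ell$ the set of residues mod $\bb_\ell$ omitted by $\operatorname{supp}x$ is nonempty. Averaging over Følner sets, we pick an invariant joining $\rho$ of $\nu$ with $P$ supported on pairs with $\operatorname{supp}x\subseteq A_g$, i.e. $x\le\varphi(g)$. Concretely, we take weak$^*$ limits of $\frac1{|F_n|}\sum_{a\in F_n}\delta_{S_ax}\otimes\mathrm{unif}(\text{omitted phases of }S_ax)$ integrated against $\nu$; the support condition is closed, so it passes to the limit, and the $G$-marginal is invariant hence equals $P$ by unique ergodicity of the rotation. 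Then $h_\nu\le h_\rho=h_\rho(\cdot\mid G)$. Using the Følner-shaped Pinsker formula (or a lexicographic-past formula for $\Z^d$), $h_\rho(\cdot\mid G)\le H_\rho(x_0\mid G\vee\text{past})\le H_\rho(x_0\mid g)$. Since $x_0\le\varphi(g)_0=\mathbf 1_C(g)$, this last quantity is at most $\log 2\cdot P(C)=\htop$. Equality forces three things: $x_0$ is a fair coin on $\{g\in C\}$, conditionally independent of $g$ given $g\in C$; $x_0$ is independent of the past given $G$; and $h_\nu=h_\rho$. The first two, by stationarity, imply that given $g$ the coordinates $(x_a)_{a\in A_g}$ are i.i.d. fair coins, i.e. $\rho=\tilde\kappa$ and hence $\nu=\kappa$.

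\emph{Main obstacle.} The delicate point is the joining step together with the equality analysis. A single $\nu$ may admit several joinings, and the conditional entropy bound must control $h_\nu$ rather than $h_\rho$. I will first try to show $\rho$ is a factor extension, i.e. that $g$ is $\rho$-a.s. measurable with respect to $x$ whenever $h_\nu>0$ is maximal. This should follow because a maximal-entropy $x$ has density $\prod(1-1/N(\bb_\ell))$, which forces $\operatorname{supp}x$ to meet all but one residue class mod each $\bb_\ell$ in a set of positive density. I expect this to need the Erdős hypotheses (pairwise coprimality and summability) through a Chinese-remainder counting on boxes in $\OK$ as in~\cite{ADK}. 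If that fails, the fallback is to run the relative-entropy inequality directly on $\rho$ and use that the maximum over joinings is attained and unique.
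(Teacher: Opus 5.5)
Your proposal is correct, but it reaches the rigidity by a different route from the paper. The paper analyses $\nu$ on $X_{\bB}$ itself. It imports from \cite{ADK} the fact \ref{itm:F2} that every measure of maximal entropy is carried by the saturated set $Y$. On $Y$ the phase $\theta(x)$ is a Borel function of $x$, so the rotation is a genuine measurable factor of $(X_{\bB},\nu,S)$. Abramov--Rokhlin is then applied to $\nu$ directly, and the single-site bound $H_\nu(x_0\mid\mathscr P^-\vee\Ftheta)\le D$, with its equality case, identifies $\nu$ (after reducing to ergodic $\nu$ via \ref{itm:F5}).

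You instead lift an \emph{arbitrary} invariant $\nu$ to a joining $\rho$ with $(G,P,T)$, carried by the closed set $\{(x,g):x\le\varphi(g)\}$. You run the same single-site bound on $\rho$, identify $\rho=\tilde\kappa$ in the equality case, and recover $\nu=\kappa$ as the $X_{\bB}$-marginal. This is the joining mechanism of \cite{KLW}, and it buys something. You need neither \ref{itm:F2} nor the phase map. Moreover, your inequality $h_\nu(S)\le D$ holds for every invariant $\nu$ on $X_{\bB}$, so together with $h_\kappa(S)=D$ and the variational principle it reproves $\htop(X_{\bB})=D$. The appeal to Theorem~B of \cite{ADK} is therefore unnecessary. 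The price is working on the extension $X_{\bB}\times G$ and needing the extra factor inequality $h_\nu\le h_\rho$; given \ref{itm:F2}, the paper's route is shorter.

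Incidentally, no weak$^*$ limit is needed. Let $\lambda_x$ be the product over $\ell$ of the uniform measures on $\{-r:\operatorname{supp}x\cap(r+\bb_\ell)=\varnothing\}$. Then $\lambda_{S_ax}=(T_a)_*\lambda_x$, so $\rho=\int\delta_x\otimes\lambda_x\,d\nu(x)$ is already invariant, and each of your F\o lner averages equals it.

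Your ``main obstacle'' is not one, and you should drop the plan to prove that $g$ is $x$-measurable; that would just be reproving \ref{itm:F2}. Suppose $h_\nu(S)=D$. Then $D=h_\nu(S)\le h_\rho\le H_\rho(x_0\mid G\vee\text{past})\le D$ for \emph{whichever} joining $\rho$ you built. So the equality case applies to $\rho$, forces $\rho=\tilde\kappa$, and hence $\nu=\kappa$. This is your ``fallback'', and it is already what your main paragraph does. Also, a $\kappa$-typical $x$ has density $D/2$ of ones; it is $A_g$ that has density $D$.
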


Theorem~\ref{thm:main} exhibits the measure of maximal entropy explicitly, and
the description makes its contrast with the Mirsky measure clear. Both
measures are carried by $Y$ and both project to the Haar measure $P$ under the
phase map: they are skew products over the same rotation, and they
differ only in the randomness of their fibres. Conditionally on a $P$-typical
phase $g$, the measure $\kappa$ tosses an independent fair coin at every
allowed position in $A_g$ (and writes $0$ elsewhere), extracting one full bit
of entropy per allowed site; this is the source of the value $h_\kappa(S)=D$. The
Mirsky measure corresponds to the degenerate case in which all coins come up heads:
conditionally on the phase, $\nu_\eta$ is the Dirac mass at $\varphi(g)$, the
configuration occupying every allowed position, and no entropy is produced.
This dichotomy also makes the mutual singularity explicit: $\kappa$-almost
surely the density of occupied sites among the allowed ones equals $\tfrac12$,
whereas $\nu_\eta$-almost surely it equals $1$. Since $\eta$ itself is generic
for $\nu_\eta$, the generic $\bB$-free configuration is, from the point of view
of maximal entropy, atypical: it is the all-heads outcome, the single most
orderly configuration compatible with its phase.

Because~\cite{ADK} realises the $k$-free and $\bB$-free lattice-point systems and
the $k$-free number-field system as special cases or topological conjugates of
the systems covered by Theorem~\ref{thm:main}, and intrinsic ergodicity is a
conjugacy invariant, we obtain the following.

\begin{corollary}\label{cor:transfer}
The $k$-free lattice-point subshifts of Pleasants--Huck~\cite{PH} and
Huck--Baake~\cite{HB}, the $\bB$-free lattice-point subshifts of
Baake--Huck~\cite{BH}, and the $k$-free number-field subshifts of
Cellarosi--Vinogradov~\cite{CV} are all intrinsically ergodic.
\end{corollary}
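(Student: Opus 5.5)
The statement to prove is Corollary~\ref{cor:transfer}, and the plan is a transfer argument resting on Theorem~\ref{thm:main}. For each of the four families of subshifts listed, I would recall from~\cite{ADK} how it is realised inside the $\bB$-free number-field framework. Two situations arise.

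\emph{Special cases.} The $k$-free number-field subshift of~\cite{CV} is literally $X_{\bB}$ for $\bB=\{\mathfrak p^k:\mathfrak p \text{ prime}\}$ in $\OK$. This family is Erd\H{o}s for $k\ge2$: distinct prime powers are pairwise coprime, and $\sum_{\mathfrak p}N(\mathfrak p)^{-k}\le\zeta_K(k)<\infty$.

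\emph{Conjugate cases.} The $k$-free and $\bB$-free lattice-point subshifts in $\Z^d$ are topologically conjugate, via the explicit isomorphisms of~\cite{ADK}, to $\bB'$-free subshifts over a suitable $\OK$ (or a product/\'etale variant). Here I would check that the conjugacy intertwines the $\Z^d$-shift with the $\OK\cong\Z^d$-shift through a group isomorphism. I would also check that the family $\bB'$ it produces satisfies Definition~\ref{def:erdos}, and in particular that the summability hypothesis transfers.

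Next I would verify the abstract fact that intrinsic ergodicity is a conjugacy invariant, stated carefully for group actions. Let $\psi\colon X\to X'$ be a homeomorphism with $\psi\circ S_a=S'_{\theta(a)}\circ\psi$ for a group isomorphism $\theta\colon\Z^d\to\OK$. Then $\psi_*$ is a bijection between the invariant measures of the two systems. It also preserves measure-theoretic entropy: $\psi$ is a measure isomorphism, and reindexing the acting group by an automorphism does not change entropy once it is normalised along F\o lner sequences. The same reindexing argument gives $\htop(X)=\htop(X')$, so maximal-entropy measures correspond bijectively and uniqueness transfers.

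The main obstacle is the reindexing step when $\theta$ is not the identity, for instance an identification of $\OK$ with $\Z^d$ through an integral basis. Entropy of $\Z^d$-actions is independent of the F\o lner sequence chosen, and $\theta$ maps F\o lner sequences to F\o lner sequences, so the invariance holds without any determinant factor. I would make this explicit, and also confirm that~\cite{ADK} provides genuine conjugacies rather than merely factor maps. Granting these points, Theorem~\ref{thm:main} yields a unique measure of maximal entropy in every listed case, namely the pullback of $\kappa$, which proves the corollary.
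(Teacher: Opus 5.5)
Your proposal is correct and takes the same route as the paper. The paper's proof just cites \cite{ADK} for two facts: the $k$-free number-field system is a special case of $X_{\bB}$, and the lattice-point systems are topologically conjugate to instances of it. It then invokes conjugacy invariance of intrinsic ergodicity. Your extra checks, namely the Erd\H{o}s property of the relevant families (e.g.\ $\{\mathfrak p^k\}$, or $\{p^k\OK\}$ for lattices) and the invariance of entropy when the acting group is reindexed by an isomorphism $\Z^d\to\OK$, are sound details that the paper leaves implicit.
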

Theorem \ref{thm:main} also yields an equidistribution statement for uniformly random admissible patterns; see Corollary \ref{cor:equid}.

\subsection{Strategy of proof}\label{ss:strategy}

Although the maximal measure is of positive entropy and is \emph{not} the rotation
measure $\nu_\eta$; nevertheless the rotation is the organising factor of the
problem. Our argument proceeds in three reductions (Section~\ref{sec:reduction})
and one rigidity step, which we prove twice.

The starting point, taken from~\cite{ADK}, is that \emph{any} measure of maximal entropy
is concentrated on the \emph{saturated set} $Y$, defined as 
\begin{equation}\label{eq:def-Y}
Y:=\big\{x\in X_{\bB}:\ |\operatorname{supp}x/\bb_\ell|=N(\bb_\ell)-1\ \text{for every }\ell\ge1\big\}\subseteq X_{\bB},
\end{equation}
i.e. the set of all admissible configurations whose support omits \emph{exactly one} class modulo
each $\bb_\ell$. For $x\in Y$ and each $\ell\geq 1$ there is a unique
$r_\ell(x)\in\OK/\bb_\ell$ with $\operatorname{supp}x\cap(r_\ell(x)+\bb_\ell)=\varnothing$. The \emph{phase map} $\theta\colon Y\to G$ is defined as
\begin{equation}\label{eq:def-theta}
\theta(x):=(-\,r_\ell(x))_{\ell\geq1} .
\end{equation}
The sign is chosen so that $\theta(\varphi(g))=g$ for all $g\in G$, the class
omitted by $A_g$ modulo $\bb_\ell$ being $-g_\ell$. Both $\varphi$ and $\theta$
are Borel, and equivariant:
\begin{equation}\label{eq:equivariance}
S_a\circ\varphi=\varphi\circ T_a,\qquad T_a\circ\theta=\theta\circ S_a\qquad(a\in\OK).
\end{equation}
By unique ergodicity of $(G,T)$, every
invariant measure on $Y$ pushes forward to $P$; and by an Abramov--Rokhlin
entropy addition formula for $\OK$($\cong\Z^d$)-actions, the entropy of such a
measure equals its relative entropy over the rotation factor, the base
contributing nothing because it has zero entropy. The problem therefore reduces
to a single relative statement (Proposition~\ref{prop:rigidity}): among measures
projecting to $P$, the relative entropy is bounded above by the density
$\prod_{\ell\geq 1}(1-1/N(\bb_\ell))$ of the allowed positions, with equality attained
\emph{only} by $\kappa$.

We prove this rigidity statement in two separate ways. The first
(Section~\ref{sec:relent}) is a  measure-theoretic argument:
expressing the relative entropy as the conditional entropy of a single site given
its lexicographic past and the phase, equality forces each allowed coordinate to
be a fair coin independent of its past and of the phase, which determines the
conditional law and hence the measure. The second (Section~\ref{sec:induce})
follows Peckner's induce-and-split scheme~\cite{Pe}, in which the skew product
over the rotation is split into a direct product of the base with a full shift;
since $\OK$ is multidimensional and has no first-return map, the inducing step is
replaced by an exact tiling of $\OK$ by cubes~\cite{DHZ, OW}, and uniqueness of the
product maximal measure transports back to $X_{\bB}$. We stress that the maximal
equicontinuous factor of the proximal system $X_{\bB}$ is trivial and the
rotation is only a \emph{measurable} factor of $X_{\bB}$. This is why our
analysis is measure-theoretic.

The rest of the paper is organised as follows. 
Section~\ref{sec:prelim}  reviews the entropy theory for $\OK$-actions and  collects the needed facts from~\cite{ADK}. Section~\ref{sec:reduction} reduces Theorem~\ref{thm:main} to the rigidity
Proposition~\ref{prop:rigidity}. Sections~\ref{sec:relent}
and~\ref{sec:induce} give the two proofs of that proposition.
Section~\ref{sec:proof} assembles the argument, proves Theorem~\ref{thm:main}
and Corollary~\ref{cor:transfer}, and discusses other consequences of intrinsic ergodicity.

\section{Preliminaries}\label{sec:prelim}
\subsection{Entropy and relative entropy for $\OK$-actions}\label{ss:entropy}
The group $\OK\cong\Z^d$ is countable amenable and we fix a tempered F\o lner sequence
$(F_n)$ in $\OK$ (for definiteness, the Minkowski boxes of~\cite{ADK}). 
%
For a continuous $\OK$-action $S=(S_a)_{a\in\OK}$ on a compact metrisable space $X$ we write
$\htop(X,S)$ for its topological entropy; see, e.g.,
\cite[Chapter~9]{KL} for the entropy theory of actions of countable amenable
groups. Given a Borel probability measure
$\nu$ on $X$, the \emph{Shannon entropy} of a finite partition
$\mathcal Q=\{Q_1,\dots,Q_k\}$ of $X$ into Borel sets is
\begin{equation}\label{eq:def-shannon}
H_\nu(\mathcal Q):=-\sum_{i=1}^{k}\nu(Q_i)\log_2\nu(Q_i),
\end{equation}
with the convention $0\log_20:=0$. For an $S$-invariant Borel
probability measure $\nu$ and a finite partition $\mathcal Q$, set
\begin{equation}\label{eq:def-hSQ}
    h_\nu(S,\mathcal Q)=\lim_{n\to\infty}\frac{1}{|F_n|}H_\nu\big(\bigvee_{a\in F_n}S_a^{-1}\mathcal Q\big),
    \end{equation}
    where the limit exists by amenability~\cite{OW}. The
Kolmogorov--Sinai entropy is  
\begin{equation}\label{eq:def-KS}
h_\nu(S)=\sup_{\mathcal Q}h_\nu(S,\mathcal Q),\end{equation}
where the supremum is over all finite Borel partitions of $X$.
The variational principle for actions of countable
amenable groups~\cite{MOP} states that
\begin{equation}\label{variational-principle}
\htop(X, S)=\sup_\nu h_\nu(S),
\end{equation}where the supremum is over all $S$-invariant Borel probability measures $\nu$ on $X$. An invariant measure
attaining the supremum in \eqref{variational-principle} is a \emph{measure of maximal entropy}.
 
Given an $S$-invariant sub-$\sigma$-algebra $\mathscr F$ of the Borel $\sigma$-algebra $\mathscr B_X$
and a finite partition $\mathcal Q=\{Q_1,\dots,Q_k\}$ of $X$,
the \emph{conditional Shannon entropy} 
is
\begin{equation}\label{eq:def-condent}
H_\nu(\mathcal Q\mid\mathscr F)
:=\int_X\left(\sum_{i=1}^k-\,\mathbb E_\nu[\mathbf 1_{Q_i}\mid\mathscr F]
\log_2\mathbb E_\nu[\mathbf 1_{Q_i}\mid\mathscr F]\right)\,d\nu.
\end{equation}
Analogously to the unconditional setting, we define
\begin{equation}\label{eq:def-relent1}
h_\nu(S,\mathcal Q\mid\mathscr F)=\lim_{n\to\infty}\frac1{|F_n|}H_\nu\Big(\textstyle\bigvee_{a\in F_n}S_a^{-1}\mathcal Q\,\Big|\,\mathscr F\Big)
\end{equation}
and the \emph{relative entropy} is
\begin{equation}\label{eq:def-relent2}
h_\nu(S\mid\mathscr F)=\sup_{\mathcal Q}h_\nu(S,\mathcal Q\mid\mathscr F),
\end{equation}
where the supremum is again over all finite Borel partitions of $X$. We
freely use the following standard properties of conditional entropy
\ref{itm:E1}--\ref{itm:E5} (see, e.g., \cite[Chapter~1]{Do}), together with
the dynamical property \ref{itm:E6}:
\begin{enumerate}[label=\textup{(E\arabic*)},ref={(E\arabic*)},leftmargin=3em]
  \item\label{itm:E1} For a partition $\mathcal Q=\{Q_1,\dots,Q_k\}$, we have $H_\nu(\mathcal Q\mid\mathscr F)\le H_\nu(\mathcal Q)\le\log_2k$.
  \item\label{itm:E2} For $\sigma$-algebras $\mathscr F\subseteq\mathscr F'$, we have $H_\nu(\mathcal Q\mid\mathscr F')\le
  H_\nu(\mathcal Q\mid\mathscr F)$.
  \item\label{itm:E3} The chain rule:
  $H_\nu(\mathcal Q\vee\mathcal R\mid\mathscr F)
  =H_\nu(\mathcal Q\mid\mathscr F)+H_\nu(\mathcal R\mid\sigma(\mathcal Q)\vee\mathscr F)$.
  \item\label{itm:E4} Conditional subadditivity:
  $H_\nu(\mathcal Q\vee\mathcal R\mid\mathscr F)\le
  H_\nu(\mathcal Q\mid\mathscr F)+H_\nu(\mathcal R\mid\mathscr F)$, with equality
  if and only if $\mathcal Q$ and $\mathcal R$ are conditionally independent
  given $\mathscr F$.
  \item\label{itm:E5} Martingale continuity: 
  $H_\nu(\mathcal Q\mid\mathscr F_m)\downarrow H_\nu(\mathcal Q\mid\mathscr F)$
  whenever $\mathscr F_m\uparrow\mathscr F$ is an increasing sequence of
  sub-$\sigma$-algebras generating $\mathscr F$.
  \item\label{itm:E6} The Kolmogorov--Sinai theorem and its relative version
  for amenable group actions~\cite{OW,WZ}: if $\mathcal P$ is a
  \emph{generating} partition for the action $S$ on $X$ (i.e. the $\sigma$-algebra generated by  $\bigvee_{a\in\OK}S_a^{-1}\mathcal P$ is the Borel $\sigma$-algebra $\mathscr B_{X}$), then
  $h_\nu(S)=h_\nu(S,\mathcal P)$ and, for every $S$-invariant
  sub-$\sigma$-algebra $\mathscr F$,
  $h_\nu(S\mid\mathscr F)=h_\nu(S,\mathcal P\mid\mathscr F)$.
\end{enumerate}
For $X=X_{\bB}$ the
time-$0$ partition $\mathcal P=\{[x_0=0],[x_0=1]\}$ is generating,
so by \ref{itm:E6} both suprema in \eqref{eq:def-KS} and
\eqref{eq:def-relent2} are attained at $\mathcal P$. If
$\pi\colon(X,\nu,S)\to(Z,\pi_*\nu,R)$ is a measurable factor map of
$\OK$-systems (with $S=(S_a)_{a\in\OK}$, $R=(R_a)_{a\in\OK}$, and $\pi\circ S_a=R_a\circ\pi$ $\nu$-almost everywhere for every $a\in\OK$) and $\mathscr F=\pi^{-1}(\mathscr B_Z)$ (an $(S_a)$-invariant
sub-$\sigma$-algebra), then the Abramov--Rokhlin addition formula for amenable
group actions~\cite{WZ} gives
\begin{equation}\label{eq:AR}
h_\nu(S)=h_{\pi_*\nu}(R)+h_\nu(S\mid\mathscr F).
\end{equation}

\subsection{Facts from Ara\'ujo--Dymek--Ku\l aga-Przymus~\texorpdfstring{\cite{ADK}}{[ADK]}}\label{ss:facts}
We collect here various facts, either established
in~\cite{ADK} or standard, which we will need. Recall the $\OK$-actions $S$ and $T$ from \eqref{eq:S-and-T}.
\begin{enumerate}[label=\textup{(F\arabic*)},ref={(F\arabic*)},leftmargin=3em]
  \item\label{itm:F1} One has $\htop(X_{\bB},S)=D$, where $D:=\prod_{\bb\in\bB}(1-1/N(\bb))>0$
  (the product converges and is positive by Definition~\ref{def:erdos}).
  \item\label{itm:F2} Every measure of maximal entropy of $(X_{\bB},S)$ is concentrated on $Y$.
  \item\label{itm:F3} The system $(G,P,T)$ is uniquely ergodic, with $h_P(T)=0$.
  \item\label{itm:F4} The maps $\varphi\colon G\to X_{\bB}$ and $\theta\colon Y\to G$ are Borel and
  equivariant (see Sections~\ref{ss:mainthm} and~\ref{ss:strategy} above), and $\nu_\eta=\varphi_*P$.
  \item\label{itm:F5} The space $X_{\bB}$ is a subshift over the finite alphabet $\{0,1\}$, hence admits at
  least one measure of maximal entropy; and in the ergodic decomposition of a
  measure of maximal entropy almost every ergodic component is again of maximal
  entropy.\footnote{Precisely: every $S$-invariant Borel probability measure
  $\nu$ on $X_{\bB}$ admits a unique \emph{ergodic decomposition}
  $\nu=\int_{\mathcal E}m\,d\lambda_\nu(m)$, where $\mathcal E$ is the set of
  ergodic $S$-invariant Borel probability measures on $X_{\bB}$ and
  $\lambda_\nu$ is a Borel probability measure on $\mathcal E$. Moreover, for
  actions of countable amenable groups the entropy map satisfies the integral
  formula $h_\nu(S)=\int_{\mathcal E}h_m(S)\,d\lambda_\nu(m)$; see,
  e.g.,~\cite{OW}. If $h_\nu(S)=\htop(X_{\bB})$ then, since
  $h_m(S)\le\htop(X_{\bB})$ for every $m\in\mathcal E$ by the variational
  principle, the integral formula forces $h_m(S)=\htop(X_{\bB})$ for
  $\lambda_\nu$-almost every $m\in\mathcal E$. This is the meaning of ``almost
  every ergodic component is again of maximal entropy''.}
\end{enumerate}

\section{Reduction to a rigidity statement}\label{sec:reduction}

Recall the measure $\kappa$ defined in \eqref{eq:def-kappa}, and set
$\Ftheta:=\theta^{-1}(\mathscr B_G)$ (an $S$-invariant $\sigma$-algebra, by
\ref{itm:F4}). The next three propositions reduce Theorem~\ref{thm:main} to a single
rigidity statement.

\begin{proposition}[Projection to the Haar factor; cf.~\cite{ADK}]\label{prop:haar}
Every $(S_a)$-invariant $\nu$ with $\nu(Y)=1$ satisfies $\theta_*\nu=P$.
\end{proposition}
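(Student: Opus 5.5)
The plan is to push $\nu$ forward under the phase map and use unique ergodicity of the rotation. Concretely, I will show that $\theta_*\nu$ is a $T$-invariant Borel probability measure on $G$. Then \ref{itm:F3} forces $\theta_*\nu=P$, since $P$ is the only $T$-invariant Borel probability on $G$.

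First I check that $Y$ is a Borel, $S$-invariant subset of $X_{\bB}$ and that $\theta$ is Borel on it. Borel measurability is in \ref{itm:F4}, but it can also be seen directly. For each $\ell$ and each class $r\in\OK/\bb_\ell$, the condition $\operatorname{supp}x\cap(r+\bb_\ell)=\varnothing$ is a countable intersection of cylinder conditions $x_a=0$, so it defines a closed set. Both $Y$ and the coordinates $r_\ell$ of $\theta$ are then built from these sets by finitely many Boolean operations in each $\ell$ and countably many across $\ell$.

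For invariance, note that $\operatorname{supp}(S_ax)=\operatorname{supp}x-a$. Translating by $a$ permutes the residue classes modulo each $\bb_\ell$, so $S_aY=Y$. Moreover $r_\ell(S_ax)=r_\ell(x)-a$, which gives $\theta(S_ax)=\theta(x)+a=T_a\theta(x)$. This is exactly the equivariance \eqref{eq:equivariance}.

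Since $\nu(Y)=1$, the push-forward $\theta_*\nu$ is a Borel probability measure on $G$; I define $\theta$ arbitrarily on the $\nu$-null set $X_{\bB}\setminus Y$, which is harmless. For a Borel set $B\subseteq G$ and $a\in\OK$, equivariance and the $S$-invariance of $Y$ give
\[
\theta_*\nu(T_a^{-1}B)=\nu\bigl(\{x\in Y:T_a\theta(x)\in B\}\bigr)=\nu\bigl(\{x\in Y:\theta(S_ax)\in B\}\bigr)=\nu\bigl(S_a^{-1}\theta^{-1}(B)\bigr)=\theta_*\nu(B).
\]
So $\theta_*\nu$ is $T$-invariant, and unique ergodicity concludes the proof.

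If one wants \ref{itm:F3} justified rather than quoted, the argument goes as follows. Any $T$-invariant measure on $G$ projects, for each finite $L$, to a measure on the finite group $\prod_{\ell\le L}\OK/\bb_\ell$. By the Chinese remainder theorem, using pairwise coprimality, this group is isomorphic to $\OK/\prod_{\ell\le L}\bb_\ell$. The translation action of $\OK$ on it is therefore transitive, so the only invariant probability is the uniform one. Finite-dimensional marginals determine the measure, hence it equals $P$.

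The only mildly delicate point is the measurability of $\theta$ and the fact that $Y$ is $S$-invariant. Both are elementary once one observes that shifting permutes residue classes.
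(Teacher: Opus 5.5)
Your proof is correct and takes the same route as the paper: equivariance \ref{itm:F4} makes $\theta_*\nu$ a $T$-invariant probability measure on $G$, and unique ergodicity \ref{itm:F3} then forces $\theta_*\nu=P$. The paper only quotes these facts. Your direct checks of Borel measurability, of $S_aY=Y$ and $\theta\circ S_a=T_a\circ\theta$, and your Chinese-remainder argument for unique ergodicity are all sound self-contained additions.
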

\begin{proof}
Since $\nu(Y)=1$, the push-forward $\theta_*\nu$ is defined; by \ref{itm:F4} it is a
$(T_a)$-invariant measure on $G$, and by unique ergodicity \ref{itm:F3} it equals $P$.
This observation is made in~\cite{ADK}, in the course of the proofs underlying
their isomorphism theorem for the Mirsky measure; we have restated it for ease
of reference.
\end{proof}

\begin{proposition}[Skew-product presentation of $\kappa$]\label{prop:skew}
The map $\Phi$ defined in \eqref{eq:def-Phi} is Borel, and it
intertwines the diagonal $\OK$-action $(T_a\times S_a)_{a\in\OK}$ on its domain
with the shift action on $X_{\bB}$:
\begin{equation}\label{eq:Phi-equivariance}
\Phi\circ(T_a\times S_a)=S_a\circ\Phi\qquad\text{for every }a\in\OK.
\end{equation}
Moreover, the measure $\kappa$ is $S$-invariant, ergodic, and satisfies
$\kappa(Y)=1$ and $\theta_*\kappa=P$.
\end{proposition}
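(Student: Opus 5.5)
The plan is to verify each claim directly from the definitions, working coordinatewise. The only claim needing more than bookkeeping is $\kappa(Y)=1$.

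\emph{Borel measurability and equivariance.} By the footnote to \eqref{eq:def-phi}, $\varphi(g)(a)=\mathbf 1_C(T_ag)$ with $C\subseteq G$ closed. Hence
\[
\Phi(g,\omega)(a)=\mathbf 1_C(T_ag)\,\omega_a .
\]
This is a product of a Borel function of $g$ and a continuous function of $\omega$. Every coordinate of $\Phi$ is therefore Borel, and so $\Phi$ is Borel. For equivariance we compute, for $a,b\in\OK$,
\[
\Phi(T_ag,S_a\omega)(b)=\varphi(T_ag)(b)\,\omega_{a+b}=\varphi(g)(a+b)\,\omega_{a+b}=(S_a\Phi(g,\omega))(b).
\]
Here the middle equality uses \eqref{eq:equivariance}. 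Since $P$ is $T$-invariant and $\beta$ is $S$-invariant, $P\otimes\beta$ is invariant under $(T_a\times S_a)$. By \eqref{eq:Phi-equivariance}, $\kappa=\Phi_*(P\otimes\beta)$ is then $S$-invariant. Also $\Phi(g,\omega)\le\varphi(g)$ coordinatewise, $\varphi(g)\in X_{\bB}$, and $X_{\bB}$ is hereditary, so $\kappa$ is indeed a measure on $X_{\bB}$.

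\emph{Ergodicity.} The Bernoulli action $(\{0,1\}^{\OK},\beta,S)$ is mixing, hence weakly mixing. The rotation $(G,P,T)$ is ergodic by \ref{itm:F3}. A weakly mixing action times an ergodic action is ergodic; this standard fact holds for actions of countable abelian groups. So $(T\times S, P\otimes\beta)$ is ergodic. Finally, $\kappa$ is the image of $P\otimes\beta$ under the equivariant map $\Phi$, and a factor of an ergodic system is ergodic.

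\emph{$\kappa(Y)=1$ and $\theta_*\kappa=P$; this is the main step.} Fix $\ell\ge1$ and a class $r\in\OK/\bb_\ell$.

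First, $\operatorname{supp}\Phi(g,\omega)\subseteq A_g$, and $A_g$ misses the class $-g_\ell$ modulo $\bb_\ell$.

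Next, take $r\ne-g_\ell$. I will show that for $P\otimes\beta$-a.e.\ $(g,\omega)$ some $a\in(r+\bb_\ell)\cap A_g$ has $\omega_a=1$. Since there are countably many pairs $(\ell,r)$, these two facts together give $\Phi(g,\omega)\in Y$ almost surely, with $\theta(\Phi(g,\omega))=g$. The latter yields $\theta_*\kappa=P$ at once.

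By Fubini, it suffices to show that for $P$-a.e.\ $g$ the set $(r+\bb_\ell)\cap A_g$ is infinite. Indeed, given such a $g$, $\beta$-almost surely infinitely many independent fair coins are tossed on this set, and at least one of them shows $1$.

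To prove infiniteness I would use the pointwise ergodic theorem for the subgroup $\bb_\ell\cong\Z^d$ acting on $(G,P)$ by $T$. I apply it along a tempered Følner sequence to the function $f(g)=\mathbf 1_C(T_rg)$, where $r$ is a fixed lift of the class. The key point is to identify the invariant $\sigma$-algebra of this action.
\begin{itemize}
\item The closure of the image of $\bb_\ell$ in $G$ is $\{0\}\times\prod_{j\ne\ell}\OK/\bb_j$. This follows from pairwise coprimality and the Chinese remainder theorem, which make $\bb_\ell\to\prod_{j\in J}\OK/\bb_j$ surjective for every finite $J\not\ni\ell$.
\item Consequently the invariant $\sigma$-algebra is generated by $g_\ell$.
\item The conditional expectation of $f$ given $g_\ell$ equals $\prod_{j\ne\ell}(1-1/N(\bb_j))>0$ whenever $g_\ell+r\ne0$, by independence of the coordinates under $P$.
\end{itemize}
Hence for a.e.\ $g$ with $g_\ell\neq -r$, the set $(r+\bb_\ell)\cap A_g$ has positive density in $r+\bb_\ell$, and in particular is infinite.

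I expect this identification of the invariant factor of the $\bb_\ell$-action to be the only delicate point of the proof.
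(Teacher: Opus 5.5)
Your proposal is correct and follows the paper's proof essentially step for step. The steps match: coordinatewise Borel measurability and equivariance, ergodicity via weak mixing of the Bernoulli factor, positive density of allowed sites in each class $r+\bb_\ell$ via the pointwise ergodic theorem for the $\bb_\ell$-rotation, and the fair-coin/Fubini argument. The only cosmetic difference is that the paper first observes the relevant set depends only on $(g_m)_{m\neq\ell}$ and applies the ergodic theorem on $\prod_{m\neq\ell}\OK/\bb_m$, where the $\bb_\ell$-rotation is uniquely ergodic; you instead stay on $G$ and identify the invariant $\sigma$-algebra as $\sigma(g_\ell)$, which is equivalent.
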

\begin{proof}
The map $\Phi$ is Borel because each coordinate
$\Phi(g,\omega)(a)=\varphi(g)(a)\,\omega(a)$ is the product of a Borel and a
continuous function of $(g,\omega)$.
Equivariance of $\Phi$ follows from $S_a\varphi=\varphi T_a$ together with
$S_a(x\cdot\omega)=(S_ax)\cdot(S_a\omega)$; since $P\otimes\beta$ is invariant
under the diagonal action, $\kappa$ is $S$-invariant.

Let us now show ergodicity. The Bernoulli system $(\{0,1\}^{\OK},\beta,S)$ is
mixing, hence weakly mixing, and the product of an ergodic system with a weakly
mixing system is ergodic; so $P\otimes\beta$ is ergodic for the diagonal action
and its factor $\kappa$ is ergodic.

Let us now show that $\kappa(Y)=1$ and $\theta_*\kappa=P$. By construction
$\operatorname{supp}\Phi(g,\omega)\subseteq A_g$, which avoids the class
$-g_\ell$ modulo $\bb_\ell$ for every $\ell$. It remains to check that, almost
surely, every \emph{other} class is met.
Fix $\ell$ and $r\in\OK/\bb_\ell$ with $r\neq-g_\ell$, and pick
$a_0\in\OK$ with $\pi_\ell(a_0)=r$. For $a\in a_0+\bb_\ell$ the condition $a\in
A_g$ reads $a\not\equiv-g_m\ (\mathrm{mod}\ \bb_m)$ for all $m\neq\ell$ (the
$\ell$-th constraint holds automatically) and thus depends only on
$g^{(\ell)}:=(g_m)_{m\neq\ell}$. By pairwise coprimality the image of the ideal
$\bb_\ell$ in any finite product $\prod_{m\in M}\OK/\bb_m$ (with $\ell\notin M$)
is all of it, so the diagonal image of $\bb_\ell$ is dense in
$G^{(\ell)}:=\prod_{m\neq\ell}\OK/\bb_m$ and the $\bb_\ell$-rotation on
$G^{(\ell)}$ with its Haar measure is uniquely ergodic. The pointwise ergodic
theorem along F\o lner sets of $\bb_\ell\cong\Z^d$ then gives, for $P$-a.e.\
$g$, that $\{a\in a_0+\bb_\ell:\ a\in A_g\}$ has density
$\prod_{m\neq\ell}(1-1/N(\bb_m))>0$ in $a_0+\bb_\ell$; in particular this set is
infinite. Since $\omega$ is $\beta$-distributed independently of $g$, and fair
coins indexed by an infinite set are almost surely not all $0$, we get that almost surely
$\omega_a=1$ for some allowed $a$ in the class $r$. Then, intersecting over the
countably many pairs $(\ell,r)$ preserves full measure.
Hence for $P\otimes\beta$-a.e.\ $(g,\omega)$ the support of $\Phi(g,\omega)$
meets exactly the $N(\bb_\ell)-1$ classes distinct from $-g_\ell$ modulo each
$\bb_\ell$, i.e.\ $\Phi(g,\omega)\in Y$ and $\theta(\Phi(g,\omega))=g$. Thus
$\kappa(Y)=1$ and $\theta_*\kappa=P$, as claimed
\end{proof}

\begin{proposition}[Abramov--Rokhlin decomposition]\label{prop:abramov}
For any ergodic $S$-invariant measure $\nu$ with $\nu(Y)=1$ and $\theta_*\nu=P$,
$h_\nu(S)=h_\nu(S\mid\Ftheta)$.
\end{proposition}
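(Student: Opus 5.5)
The plan is to apply the Abramov--Rokhlin formula \eqref{eq:AR} to the phase map $\theta$, viewed as a measurable factor map from $(X_{\bB},\nu,S)$ to $(G,P,T)$, and then use the vanishing of the base entropy from \ref{itm:F3}.

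First, set up the factor. Since $\nu(Y)=1$, the map $\theta$ is defined $\nu$-almost everywhere. Extend it arbitrarily, say by $0$, on $X_{\bB}\setminus Y$ to obtain a Borel map $X_{\bB}\to G$; Borel measurability comes from \ref{itm:F4}. The set $Y$ is $S$-invariant, because shifting a configuration permutes the residue classes modulo each $\bb_\ell$ and so preserves $|\operatorname{supp}x/\bb_\ell|$. Together with \eqref{eq:equivariance}, this gives $\theta\circ S_a=T_a\circ\theta$ on $Y$, hence $\nu$-almost everywhere, for every $a\in\OK$.

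By hypothesis, $\theta_*\nu=P$, and this also follows from Proposition~\ref{prop:haar}. So $\theta$ is a measurable factor map of $\OK$-systems from $(X_{\bB},\nu,S)$ onto $(G,P,T)$. Its pulled-back $\sigma$-algebra agrees modulo $\nu$-null sets with $\Ftheta$, and the choice of extension off $Y$ is irrelevant for conditional entropies, since those only see $\nu$-null modifications.

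Next, \eqref{eq:AR} gives
\[
h_\nu(S)=h_P(T)+h_\nu(S\mid\Ftheta).
\]
By \ref{itm:F3}, $h_P(T)=0$, and the claim follows.

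The step that needs the most care is justifying the use of \eqref{eq:AR} here. First, $\Ftheta$ must be $S$-invariant; this holds modulo null sets by the a.e.\ equivariance. Second, the base entropy must vanish. If one wants to see why $h_P(T)=0$ rather than simply quote \ref{itm:F3}, one can argue as follows. $P$ is an inverse limit of finite rotations $\prod_{\ell\le L}\OK/\bb_\ell$, and each of these has zero entropy because it is a finite orbit. Entropy is continuous along increasing sequences of factors generating the whole $\sigma$-algebra, so the limit also has zero entropy. The ergodicity hypothesis on $\nu$ is not really needed for the formula, but it is harmless and matches the setting of \cite{WZ}.
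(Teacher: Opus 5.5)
Your proposal is correct and follows the paper's proof: you apply the Abramov--Rokhlin formula \eqref{eq:AR} to the phase map $\theta\colon(X_{\bB},\nu,S)\to(G,P,T)$, and the base term vanishes because $h_P(T)=0$ by \ref{itm:F3}. Your additional details are all sound and just make explicit what the paper leaves implicit: the $S$-invariance of $Y$, a.e.\ equivariance, extending $\theta$ off $Y$, and the inverse-limit argument for $h_P(T)=0$.
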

\begin{proof}
Simply apply the Abramov--Rokhlin addition formula \eqref{eq:AR} to the measurable factor
$\theta\colon(X_{\bB},\nu,S)\to(G,P,T)$ and observe that the base term vanishes since
$h_P(T)=0$ by \ref{itm:F3}.
\end{proof}

\begin{proposition}[Rigidity]\label{prop:rigidity}
For every $S$-invariant measure $\nu$ with $\nu(Y)=1$ and $\theta_*\nu=P$ one has
$h_\nu(S\mid\Ftheta)\le D$, and equality holds if and only if $\nu=\kappa$.
\end{proposition}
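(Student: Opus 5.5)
We follow the single-site relative-entropy route. Fix the lexicographic order $\prec$ on $\OK\cong\Z^d$ (via a fixed $\Z$-basis) and let $\Pi:=\sigma\{x_a: a\prec 0\}$ denote the past $\sigma$-algebra. We will use the relative Pinsker-type formula for $\Z^d$-actions: for the generating time-$0$ partition $\mathcal P$ and the invariant $\sigma$-algebra $\Ftheta$,
\begin{equation*}
h_\nu(S\mid\Ftheta)=H_\nu(\mathcal P\mid \Pi\vee\Ftheta).
\end{equation*}
This identity follows from \ref{itm:E6}, the chain rule \ref{itm:E3} applied along the lexicographic order inside the boxes $F_n$, invariance, and martingale continuity \ref{itm:E5}, in the standard way. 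Boundary effects are negligible for a F\o lner sequence of boxes, and the fact that $\Ftheta$ is invariant lets each conditional term be shifted back to the origin.

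The upper bound comes from examining a single site. On $Y$ the event $\{0\notin A_{\theta(x)}\}$ is $\Ftheta$-measurable, and on that event $x_0=0$ almost surely, since $x_0=1$ would put $\operatorname{supp}x$ in the omitted class $r_\ell(x)$ for some $\ell$. Hence
\begin{equation*}
H_\nu(\mathcal P\mid\Pi\vee\Ftheta)=\int_{\{0\in A_{\theta x}\}} H(\text{law of }x_0\mid\Pi\vee\Ftheta)\,d\nu\le \nu\{x:0\in A_{\theta(x)}\}\cdot 1 .
\end{equation*}
By Proposition~\ref{prop:haar}, $\theta_*\nu=P$, so this probability equals $P\{g:g_\ell\neq0\ \forall\ell\}=D$. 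This gives $h_\nu(S\mid\Ftheta)\le D$. For $\kappa$, Proposition~\ref{prop:skew} shows that, given the phase, the allowed coordinates are i.i.d.\ fair coins independent of the past. Moreover, $\theta\circ\Phi=$ first coordinate almost surely, so conditioning on $\Ftheta$ is the same as conditioning on $g$. Thus equality holds for $\kappa$.

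The main work is the converse. Suppose $h_\nu(S\mid\Ftheta)=D$. Then for $\nu$-a.e.\ $x$ with $0\in A_{\theta x}$, the conditional law of $x_0$ given $\Pi\vee\Ftheta$ is $(\tfrac12,\tfrac12)$. By invariance, the same statement holds at every site $a$: given $\Ftheta$ and $\{x_b:b\prec a\}$, the coordinate $x_a$ is a fair coin if $a\in A_{\theta x}$ and is $0$ otherwise. Now take a finite box $F$ and list its points in increasing lexicographic order. Since each site's conditional law is determined given the full past and $\Ftheta$, it is a fortiori determined given $\Ftheta$ and the earlier points of $F$; this uses the tower property, because the constant conditional law $\tfrac12$ survives coarsening. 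The chain rule then shows that the conditional law of $(x_a)_{a\in F}$ given $\Ftheta$ is the product of fair coins on $F\cap A_g$ and zeros elsewhere, with $g=\theta(x)$. This coincides with the conditional law under $\kappa$. Since both measures project to $P$ and cylinders generate, it follows that $\nu=\kappa$.

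The steps needing care are these. First, the past formula must be established relative to $\Ftheta$, which is not generated by finitely many coordinates. We would justify it with \ref{itm:E5}, applied to finite-stage approximations $\theta_L=(r_\ell)_{\ell\le L}$, together with \ref{itm:E6}; alternatively we could cite the relative version from~\cite{WZ}. Second, no ergodicity of $\nu$ is needed for this argument, so the statement holds for all invariant $\nu$, as claimed.
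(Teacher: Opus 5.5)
Your proposal is correct and is essentially the paper's first proof. It follows the same steps: the past formula $h_\nu(S\mid\Ftheta)=H_\nu(x_0\mid\mathscr P^-\vee\Ftheta)$ (Lemma~\ref{lem:site}); the forbidden-site bound by $\nu(\{0\in A_g\})=D$ (Lemma~\ref{lem:forbidden}); the equality case forcing fair coins at allowed sites, moved to every site by invariance; and the tower property plus the chain rule to match the conditional finite-dimensional laws with those of $\kappa$. The only difference is your proposed approximation of $\Ftheta$ by finite-stage phases $\theta_L$, which is unnecessary: the chain rule, monotonicity and martingale continuity hold verbatim with an arbitrary conditioning $\sigma$-algebra, so the paper approximates only the past (by the windows $B_m$) and conditions on the full $\Ftheta$ throughout.
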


Proposition~\ref{prop:rigidity} is the key technical contribution of this work. We give two separate proofs of it: in Section~\ref{sec:relent} by a relative-entropy
computation, and in Section~\ref{sec:induce} by an induce-and-split argument.
Either proof, combined with Propositions~\ref{prop:haar}--\ref{prop:abramov} and
the facts \ref{itm:F1}--\ref{itm:F5}, yields Theorem~\ref{thm:main}; the assembly is carried out
in Section~\ref{sec:proof}.

\section{First proof of the rigidity: relative entropy}\label{sec:relent}

Throughout this section, $\nu$ denotes an $S$-invariant measure with $\nu(Y)=1$ and
$\theta_*\nu=P$, as in Proposition~\ref{prop:rigidity}. 
Fix
the group isomorphism $\iota\colon\Z^d\to\OK$ from~\cite{ADK}
and let $\prec$ be the pullback of the lexicographic order on $\Z^d$ (a
translation-invariant total order). For $W\subseteq\OK$ we write
$(x_b)_{b\in W}\vee\Ftheta$ for the $\sigma$-algebra generated by the coordinates
$x_b$, $b\in W$, together with $\Ftheta$ (defined in Section \ref{sec:reduction}). We also set
$\mathscr P^-:=\sigma\big(\bigvee_{b\prec0}S_b^{-1}\mathcal P\big)$ (the $\sigma$-algebra generated by the lexicographic past) and
$\mathscr G:=\mathscr P^-\vee\Ftheta$. Finally, for $a\in\OK$ we write
$\{a\in A_g\}$ for the event
$\{x\in Y:\ a\in A_{\theta(x)}\}=\theta^{-1}\big(\{g\in G:\ a\in A_g\}\big)$,
which belongs to $\Ftheta$.

\begin{lemma}[Relative entropy as a single-site conditional entropy]\label{lem:site}
One has $h_\nu(S\mid\Ftheta)=H_\nu\big(x_0\mid\mathscr G\big)$.
\end{lemma}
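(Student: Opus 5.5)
By \ref{itm:E6}, the time-zero partition $\mathcal P$ is generating, so $h_\nu(S\mid\Ftheta)=h_\nu(S,\mathcal P\mid\Ftheta)$. By \eqref{eq:def-relent1} this equals
\[
\lim_n\frac1{|F_n|}H_\nu\big((x_b)_{b\in F_n}\mid\Ftheta\big).
\]
The plan is to run the standard "entropy equals conditional entropy given the past" argument relative to the invariant $\sigma$-algebra $\Ftheta$. We use the translation-invariant lexicographic order $\prec$ and a Følner sequence of boxes $F_n=\iota(\{0,\dots,n-1\}^d)$, where $\iota$ is the isomorphism of \cite{ADK}. The limit does not depend on the Følner sequence, so this choice is harmless; alternatively one can work with any sequence of boxes, since the relative entropy of amenable actions is computed along any Følner sequence.

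First, enumerate $F_n=\{b_1\prec b_2\prec\dots\prec b_{|F_n|}\}$ and apply the chain rule \ref{itm:E3} repeatedly:
\[
H_\nu\big((x_b)_{b\in F_n}\mid\Ftheta\big)=\sum_{j}H_\nu\big(x_{b_j}\mid (x_{b_i})_{i<j}\vee\Ftheta\big).
\]
Next, shift each term by $b_j$. By $S$-invariance of $\nu$, and because $S_{b}^{-1}\Ftheta=\Ftheta$ (equivariance of $\theta$), each term equals
\[
H_\nu\big(x_0\mid (x_c)_{c\in W_j}\vee\Ftheta\big),\qquad W_j:=\{b_i-b_j:\ i<j\}\subseteq\{c\prec0\}.
\]
Translation invariance of $\prec$ is what guarantees that $W_j$ lies in the strict past. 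By \ref{itm:E2}, each term is therefore $\ge H_\nu(x_0\mid\mathscr G)$, which gives the lower bound $h_\nu(S\mid\Ftheta)\ge H_\nu(x_0\mid\mathscr G)$.

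For the upper bound, fix $\epsilon>0$. By martingale continuity \ref{itm:E5}, applied to the increasing $\sigma$-algebras generated by $\Ftheta$ together with the past coordinates in $\{c\prec0\}\cap[-R,R]^d$, there is an $R$ such that
\[
H_\nu\big(x_0\mid (x_c)_{c\prec0,\ \|c\|_\infty\le R}\vee\Ftheta\big)\le H_\nu(x_0\mid\mathscr G)+\epsilon.
\]
Now take any $b_j$ at distance $>R$ from the boundary of the box. Then $W_j$ contains every $c\prec0$ with $\|c\|_\infty\le R$: indeed $b_j+c\in F_n$, and $b_j+c\prec b_j$. So by \ref{itm:E2} that term is $\le H_\nu(x_0\mid\mathscr G)+\epsilon$. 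The remaining terms are at most $1$ by \ref{itm:E1}, and their number is $o(|F_n|)$ by the Følner property. Dividing by $|F_n|$, letting $n\to\infty$ and then $\epsilon\to0$, gives $h_\nu(S\mid\Ftheta)\le H_\nu(x_0\mid\mathscr G)$.

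The main obstacle is the martingale step. It needs an increasing sequence of finite pieces of the past that exhausts $\{c\prec0\}$; the symmetric truncations $[-R,R]^d$ above provide exactly this. It also needs the measure-theoretic hygiene that $\Ftheta$ is defined only on $Y$, which has full measure, so it may be regarded as a sub-$\sigma$-algebra modulo $\nu$. Once these are in place, the interior-point observation above is routine.
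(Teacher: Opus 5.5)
Your proof is correct and follows the paper's argument essentially step for step. The shared steps are: the chain rule along the lexicographic enumeration of $F_n$; shift-invariance of $\nu$ and of $\Ftheta$ to rewrite each term as $x_0$ given a finite piece of the strict past; \ref{itm:E2} for the lower bound; and, for the upper bound, interior points together with \ref{itm:E5} applied to the truncated pasts $\iota([-m,m]^d)\cap\{b\prec0\}$. The one cosmetic difference is your switch to cube F\o lner sets, which is harmless but unnecessary, since the interior-point argument works for any F\o lner sequence (the paper simply keeps its fixed one).
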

\begin{proof}
By \ref{itm:E6} it suffices
to compute $h_\nu(S,\mathcal P\mid\Ftheta)$, where $\mathcal P$ is the time-0 partition. Enumerate
$F_n=\{a_1\prec\dots\prec a_{k_n}\}$ in increasing lexicographic order. The
chain rule \ref{itm:E3} for conditional entropy, the $S$-invariance of $\nu$, and the
$S$-invariance of $\Ftheta$ give
\begin{align}
H_\nu\Big(\bigvee_{a\in F_n}S_a^{-1}\mathcal P\,\Big|\,\Ftheta\Big)
&=\sum_{i=1}^{k_n}H_\nu\big(x_0\,\big|\,(x_b)_{b\in W_{n,i}}\vee\Ftheta\big),\label{eq:site-chainrule} 
\end{align}
where $W_{n,i}:=\{a_j-a_i:\,j<i\}\subseteq\{b:b\prec0\}$.
Using \ref{itm:E2} we see that each summand in \eqref{eq:site-chainrule} is at least
$H_\nu(x_0\mid\mathscr G)$. Dividing by $|F_n|$, we get
$h_\nu(S,\mathcal P\mid\Ftheta)\ge H_\nu(x_0\mid\mathscr G)$. For the reverse
bound fix $m\ge1$ and set $B_m:=\iota([-m,m]^d)\cap\{b:b\prec0\}$. Note that if $a_i$ lies
in the $m$-interior $\{a\in F_n:\ a+\iota([-m,m]^d)\subseteq F_n\}$, then
$W_{n,i}\supseteq B_m$, so  by \ref{itm:E2} the $i$-th summand is at most
$H_\nu(x_0\mid(x_b)_{b\in B_m}\vee\Ftheta)$. Moreover, by  \ref{itm:E1} every summand is at most $\log_22=1$, and by the F\o lner property the number of non-interior indices is $o(|F_n|)$.
Hence $h_\nu(S,\mathcal P\mid\Ftheta)\le H_\nu(x_0\mid(x_b)_{b\in B_m}\vee\Ftheta)$
for every $m$, and as $m\to\infty$ the right-hand side decreases to
$H_\nu(x_0\mid\mathscr G)$ by \ref{itm:E5} along the increasing sequence of
$\sigma$-algebras $(x_b)_{b\in B_m}\vee\Ftheta\uparrow\mathscr G$.
\end{proof}

\begin{lemma}[Forbidden sites; density of allowed sites]\label{lem:forbidden}
For $\nu$-a.e.\ $x$ and every $a\notin A_{\theta(x)}$ one has $x_a=0$.
Consequently
$\nu(\{0\in A_g\})=P(\{0\in A_g\})
=D$.
\end{lemma}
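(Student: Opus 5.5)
The first claim is essentially definitional. Recall that $\nu(Y)=1$, and for $x\in Y$ the phase $\theta(x)=(-r_\ell(x))_\ell$ records, for each $\ell$, the unique class $r_\ell(x)$ omitted by $\operatorname{supp}x$ modulo $\bb_\ell$. Fix $x\in Y$ and $a\notin A_{\theta(x)}$. By \eqref{eq:def-Ag}, there is some $\ell$ with $a\equiv-\theta(x)_\ell=r_\ell(x)\pmod{\bb_\ell}$. Hence $a\in r_\ell(x)+\bb_\ell$, and this class is disjoint from $\operatorname{supp}x$, so $x_a=0$. This holds for every $x\in Y$, hence $\nu$-almost surely, and simultaneously for all $a$. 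No measurability issue arises beyond the Borel nature of $\theta$, which is recorded in \ref{itm:F4}.

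For the density statement, the event $\{0\in A_g\}$ equals $\theta^{-1}(C)$, where $C=\{g\in G: g_\ell\neq0\ \text{for all }\ell\}$ is the closed set from the footnote. Therefore
\[
\nu(\{0\in A_g\})=\theta_*\nu(C)=P(C),
\]
using the hypothesis $\theta_*\nu=P$ (equivalently Proposition~\ref{prop:haar}). Since $P$ is the product of normalised counting measures,
\[
P(C)=\prod_{\ell\ge1}P(g_\ell\neq0)=\prod_{\ell\ge1}\Big(1-\frac1{N(\bb_\ell)}\Big).
\]
To justify this, first compute it for finitely many coordinates by independence, then pass to the decreasing limit of the cylinder sets $\{g_\ell\neq0,\ \ell\le L\}$. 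This limit is legitimate because of continuity of measure, and the resulting infinite product converges to $D>0$ by the Erd\H{o}s condition (\ref{itm:F1}).

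I expect no real obstacle here. The only point requiring care is the sign convention in $\theta$, namely checking that the class forbidden for the phase $\theta(x)$ is exactly the class $r_\ell(x)$ omitted by $x$. The definition \eqref{eq:def-theta} was chosen precisely so that this holds.
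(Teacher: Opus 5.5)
Your proof is correct and follows essentially the same route as the paper. The first claim comes from the sign convention $-\theta(x)_\ell=r_\ell(x)$ for $x\in Y$, and the density comes from $\{0\in A_g\}=\theta^{-1}(C)$, the hypothesis $\theta_*\nu=P$, and independence of the coordinates under $P$ (you also make explicit the continuity-of-measure step for the infinite product, which the paper leaves implicit).
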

\begin{proof}
Since $\nu(Y)=1$, we may write $g=\theta(x)$ for $\nu$-a.e.\ $x$. If
$a\notin A_g$ then $a\equiv-g_\ell\ (\mathrm{mod}\ \bb_\ell)$
for some $\ell$, so $a\notin\operatorname{supp}x$, i.e.\ $x_a=0$
(by definition of the phase map, $\operatorname{supp}x$ omits precisely
the class $r_\ell(x)=-g_\ell$ modulo $\bb_\ell$, and $a\notin A_g$ means
$a\equiv-g_\ell$ for some $\ell$). Moreover
$\{0\in A_g\}=\bigcap_\ell\{g_\ell\ne0\}$ is an intersection of independent
uniform events and has $P$-measure $\prod_{\bb\in\bB}(1-1/N(\bb))=D$.  The event lies
in $\Ftheta$, and $\theta_*\nu=P$ gives the last equality.
\end{proof}

\begin{proof}[First proof of Proposition~\ref{prop:rigidity}]
Let us start by proving the bound $h_\nu(S\mid\Ftheta)\le D$. Let
$p(x):=\nu(x_0=1\mid\mathscr G)(x)$ and let
$h(x):=-p(x)\log_2p(x)-(1-p(x))\log_2(1-p(x))$ be the corresponding binary
entropy. By Lemma~\ref{lem:forbidden}, $h=0$ on the
$\mathscr G$-measurable event $\{0\notin A_g\}$ and by \ref{itm:E1} $h\le\log_22=1$ on $\{0\in A_g\}$, so by Lemma~\ref{lem:site} we have
\begin{equation}\label{eq:bound-D}
h_\nu(S\mid\Ftheta)=\mathbb E_\nu[h]=\mathbb E_\nu[h\,\mathbf 1_{\{0\in A_g\}}]
\le\nu(\{0\in A_g\})=D.
\end{equation}
Let us now show assume that the equality $h_\nu(S\mid\Ftheta)=D$ holds. Then \eqref{eq:bound-D} is an equality, so $h=1$
$\nu$-a.e.\ on $\{0\in A_g\}$. Therefore the function $\nu(x_0=1\mid\mathscr G): X_{\bB}\to[0,1]$ equals $\tfrac12$ a.e.\
on the set $\{0\in A_g\}$\footnote{We are using the fact that 
$p\mapsto-p\log_2p-(1-p)\log_2(1-p)$ attains
its maximum value $1$ only at $p=\frac{1}{2}$.}. We now claim 
that, for every $a$,
\begin{equation}\label{eq:fair-coin}
\nu\big(x_a=1\mid(x_b)_{b\prec a}\vee\Ftheta\big)=\frac{1}{2}
\quad\text{$\nu$-a.e.\ on }\{a\in A_g\}.
\end{equation}
The identity \eqref{eq:fair-coin} asserts that any version\footnote{Recall that, for a sub-$\sigma$-algebra
$\mathscr A\subseteq\mathscr B_{X_{\bB}}$, a \emph{version} of
$\nu(x_a=1\mid\mathscr A)$ is any $\mathscr A$-measurable function
$f\colon X_{\bB}\to[0,1]$ satisfying $\int_Af\,d\nu=\nu(\{x_a=1\}\cap A)$ for
every $A\in\mathscr A$. Such a function exists by the Radon--Nikodym theorem
and is unique up to $\nu$-null sets, so the assertion in
\eqref{eq:fair-coin} does not depend on the choice of version.} of the conditional
probability on its left-hand side is equal to $\frac{1}{2}$ at $\nu$-almost
every point of the event $\{a\in A_g\}$. 
To derive \eqref{eq:fair-coin}, fix $a\in\OK$. We use the following
change-of-variables identity: if $\phi\colon X_{\bB}\to X_{\bB}$ is
$\nu$-preserving and $\mathscr H\subseteq\mathscr B_{X_{\bB}}$ is a
sub-$\sigma$-algebra, then, for every bounded Borel function $f$,
\begin{equation}\label{eq:cov}
\mathbb E_\nu\big[f\circ\phi\mid\phi^{-1}\mathscr H\big]
=\mathbb E_\nu\big[f\mid\mathscr H\big]\circ\phi\qquad\text{$\nu$-a.e.}
\end{equation}
We apply
\eqref{eq:cov} with $\phi=S_a$, $f=\mathbf 1_{\{x_0=1\}}$ and
$\mathscr H=\mathscr G$. First, $f\circ S_a=\mathbf 1_{\{x_a=1\}}$, since
$(S_ax)_0=x_a$. Second, $S_a^{-1}\mathscr G=(x_b)_{b\prec a}\vee\Ftheta$: in fact
the $\sigma$-algebra $S_a^{-1}\mathscr P^-$ is generated by the coordinates
$(x_{a+b})_{b\prec0}=(x_c)_{c\prec a}$ because $\prec$ is
translation-invariant, and $S_a^{-1}\Ftheta=\Ftheta$ because
$\theta\circ S_a=T_a\circ\theta$ and $T_a$ preserves $\mathscr B_G$. Therefore
\eqref{eq:cov} yields
$\nu\big(x_a=1\mid(x_b)_{b\prec a}\vee\Ftheta\big)=p\circ S_a$ $\nu$-a.e.
Finally, $S_a^{-1}\{0\in A_g\}
=\{x:0\in A_{T_a\theta(x)}\}=\{x:a\in A_{\theta(x)}\}=\{a\in A_g\}$ by
$A_{T_ag}=A_g-a$; since $p=\frac{1}{2}$ $\nu$-a.e.\ on $\{0\in A_g\}$ and
$S_a^{-1}$ maps $\nu$-null sets to $\nu$-null sets, we conclude that
$p\circ S_a=\frac{1}{2}$ $\nu$-a.e.\ on $\{a\in A_g\}$, which is precisely
\eqref{eq:fair-coin}.
Next, we claim that the finite-dimensional conditional laws of $\nu$ given $\mathscr F_\theta$ agree with those of $\kappa$. That is, for finite $F=\{a_1\prec\dots\prec a_m\}$
and $w\in\{0,1\}^F$, we claim that 
\begin{equation}\label{eq:findim}
\nu\big(x|_F=w\mid\Ftheta\big)=
\begin{cases}
\big(\frac{1}{2}\big)^{\,|F\cap A_g|}, & w_a=0\ \forall a\in F\setminus A_g,\\
0, & \text{otherwise.}
\end{cases}
\end{equation}
Indeed, for each $i$ the $\sigma$-algebra
$(x_{a_j})_{j<i}\vee\Ftheta$ is coarser than $(x_b)_{b\prec a_i}\vee\Ftheta$, so
the tower property applied to \eqref{eq:fair-coin} gives, on the $\Ftheta$-measurable event
$\{a_i\in A_g\}$,
\begin{equation}\label{eq:tower}
\nu\big(x_{a_i}=1\mid(x_{a_j})_{j<i}\vee\Ftheta\big)
=\mathbb E_\nu\Big[\nu\big(x_{a_i}=1\mid(x_b)_{b\prec a_i}\vee\Ftheta\big)\,\Big|\,
(x_{a_j})_{j<i}\vee\Ftheta\Big]=\frac{1}{2},
\end{equation}
while on $\{a_i\notin A_g\}$ one has $x_{a_i}=0$ a.s.\ by
Lemma~\ref{lem:forbidden}. Iterating the chain rule
$\nu\big(\{x|_{F'}=w|_{F'}\}\cap\{x_{a_i}=w_i\}\mid\Ftheta\big)
=\mathbb E_\nu\big[\mathbf 1_{\{x|_{F'}=w|_{F'}\}}\,
\nu(x_{a_i}=w_i\mid(x_{a_j})_{j<i}\vee\Ftheta)\mid\Ftheta\big]$ with
$F'=\{a_1,\dots,a_{i-1}\}$, from $i=m$ downwards, yields the claim.
This is exactly the conditional law of $\kappa$ given $\Ftheta$.
Both $\nu$ and $\kappa$ are carried by $Y$ and project to $P$ under
$\theta$, and their conditional laws given $\Ftheta$ are their disintegrations
over $(G,P)$. The two measures agree on the cylinder sets $\{x|_F=w\}$ (a $\pi$-system
generating $\mathscr B_{X_{\bB}}$) hence the fibre measures agree $P$-almost surely. 
Integrating, we obtain $\nu=\kappa$. Conversely $\kappa$ attains the bound, since its allowed
coordinates are fair coins independent of past and phase, giving
$h_\kappa(S)=h_\kappa(S\mid\Ftheta)=D$.
\end{proof}

\section{Second proof of the rigidity: induce-and-split}\label{sec:induce}

We now give a second, structurally different proof of
Proposition~\ref{prop:rigidity}. It realises Peckner's induce-and-split
scheme~\cite{Pe} for $\OK\cong\Z^d$. Conceptually, Lemma~\ref{lem:forbidden}
presents $(X_{\bB},\nu,S)$ over the base $(G,P,T)$ as a measurable skew product,
as follows. For $x\in Y$, associate to $x$ the pair
$(\theta(x),\operatorname{supp}x)$. By Lemma~\ref{lem:forbidden},
$\operatorname{supp}x\subseteq A_{\theta(x)}$, so the pair determines $x$
(forbidden positions carry $0$'s automatically), and $x\mapsto
(\theta(x),\operatorname{supp}x)$ is a Borel bijection, modulo $\nu$-null sets,
between $Y$ and the bundle $\{(g,\xi):\ g\in G,\ \xi\subseteq A_g\}$, whose
fibre over a phase $g$ is the space $\{0,1\}^{A_g}$ of subsets of the allowed
set. In these coordinates the shift acts by $(g,\xi)\mapsto(T_ag,\xi-a)$,
which preserves the bundle since $A_{T_ag}=A_g-a$: a rotation moves the base,
and the fibre is carried along. Under this identification, $\kappa$ is the
fibrewise fair-coin measure. Note that the skew product is \emph{not} a direct
product: the fibre space $\{0,1\}^{A_g}$ itself varies with $g$, and
straightening this dependence is precisely the role of Peckner's inducing in
dimension one, and of the cube blocking below. In one dimension Peckner induces this
skew product to a first-return system on which it \emph{splits} as a direct
product of the (zero-entropy, uniquely ergodic) base with a full shift, whose
maximal measure is unique. For $d\ge2$ there is no first-return map; we replace
the induction by an \emph{exact tiling} of $\OK$ by cubes and the ``split''
reappears as the \emph{conditional independence of the tiles} forced at the
entropy maximum. Throughout, $\nu$ is $S$-invariant with $\nu(Y)=1$ and
$\theta_*\nu=P$, as in Proposition~\ref{prop:rigidity}.

\paragraph{Cube tilings.} Fix the isomorphism $\iota\colon\Z^d\to\OK$
of~\cite{ADK}. For $N\ge1$ put $Q_N:=\iota(\{0,\dots,N-1\}^d)$ and
$\Lambda_N:=\iota(N\Z^d)\le\OK$, a finite-index subgroup with
$[\OK:\Lambda_N]=|Q_N|=N^d$ and exact tiling
$\OK=\bigsqcup_{c\in\Lambda_N}(Q_N+c)$. Write $S^{(N)}:=(S_c)_{c\in\Lambda_N}$ for
the restricted $\Lambda_N$-action and
$\mathcal P_{Q_N}:=\bigvee_{a\in Q_N}S_a^{-1}\mathcal P$ for the cube
super-partition, whose atoms are the patterns $x|_{Q_N}\in\{0,1\}^{Q_N}$.

The first lemma states that no relative entropy is created or lost by
blocking: if we observe the system only along the sublattice $\Lambda_N$, but
read at each observation the entire pattern filling the corresponding cube,
then the relative entropy of this coarser-in-time, richer-in-alphabet process
is exactly $|Q_N|$ times the relative entropy per site of the original action.

\begin{lemma}[Blocking identity]\label{lem:block}
For every $N\ge1$,
$\;|Q_N|\,h_\nu(S\mid\Ftheta)=h_\nu\big(S^{(N)},\mathcal P_{Q_N}\mid\Ftheta\big).$
\end{lemma}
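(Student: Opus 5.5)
The plan is to compute both sides through their defining Følner limits and observe that they are limits of the same conditional Shannon entropies, up to a change of normalisation. For the right-hand side I use the Følner sequence $(E_n)$ in $\Lambda_N$ given by $E_n:=\iota(N\Z^d\cap[0,Nn)^d)$, i.e.\ $n^d$ lattice points. The relative entropy $h_\nu(S^{(N)},\mathcal P_{Q_N}\mid\Ftheta)$ does not depend on the choice of Følner sequence, by the usual Ornstein--Weiss subadditivity argument; this is the same fact that underlies \eqref{eq:def-relent1}. Then
\[
\bigvee_{c\in E_n}S_c^{-1}\mathcal P_{Q_N}=\bigvee_{c\in E_n}\bigvee_{a\in Q_N}S_{c+a}^{-1}\mathcal P=\bigvee_{b\in E_n+Q_N}S_b^{-1}\mathcal P .
\]
By exactness of the tiling, $E_n+Q_N=\iota(\{0,\dots,Nn-1\}^d)=Q_{Nn}$ is a disjoint union of $n^d$ translates of $Q_N$, so $|Q_{Nn}|=|Q_N|\,|E_n|$.

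The key step is the identity
\[
\frac{1}{|E_n|}H_\nu\Big(\bigvee_{c\in E_n}S_c^{-1}\mathcal P_{Q_N}\,\Big|\,\Ftheta\Big)=|Q_N|\cdot\frac{1}{|Q_{Nn}|}H_\nu\Big(\bigvee_{b\in Q_{Nn}}S_b^{-1}\mathcal P\,\Big|\,\Ftheta\Big).
\]
We then let $n\to\infty$. The sequence $(Q_{Nn})_n$ is a Følner sequence in $\OK$ (cubes), so the right-hand side converges to $|Q_N|\,h_\nu(S,\mathcal P\mid\Ftheta)$. This equals $|Q_N|\,h_\nu(S\mid\Ftheta)$ by \ref{itm:E6}, since $\mathcal P$ is generating. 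The left-hand side converges to $h_\nu(S^{(N)},\mathcal P_{Q_N}\mid\Ftheta)$, and we need that $\Ftheta$ is $S^{(N)}$-invariant, which is clear since it is $S_a$-invariant for all $a\in\OK$.

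Alternatively, to avoid relying on independence of the Følner sequence, one can argue through Lemma~\ref{lem:site}. Order $\Lambda_N$ lexicographically via $\iota$. Applied to the $\Lambda_N$-action with alphabet $\mathcal P_{Q_N}$, the same chain-rule argument expresses the right-hand side as $H_\nu(\mathcal P_{Q_N}\mid \mathscr P^-_N\vee\Ftheta)$, where $\mathscr P^-_N$ is generated by the cubes $Q_N+c$ with $c\prec0$ in $\Lambda_N$. Expanding $\mathcal P_{Q_N}$ site by site with \ref{itm:E3}, the summands are conditional entropies of single sites given a past that differs from the lexicographic past of Lemma~\ref{lem:site}. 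Matching these summands is messier, which is why I would go with the Følner route above.

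The main obstacle is the justification that the limit defining the relative entropy along $\Lambda_N$ exists and is independent of the Følner sequence (and that the $\OK$-limit can be taken along $(Q_{Nn})$ rather than the fixed Minkowski boxes). This I would handle by citing the Ornstein--Weiss lemma for subadditive, invariant set functions \cite{OW}. Concretely, $F\mapsto H_\nu(\bigvee_{a\in F}S_a^{-1}\mathcal P\mid\Ftheta)$ is subadditive by \ref{itm:E4} and translation-invariant by $S$-invariance of $\nu$ and $\Ftheta$. Given those facts, the remaining computation is the elementary identity above.
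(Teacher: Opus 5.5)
Your proposal is correct and is essentially the paper's proof. You use the exact tiling to turn F\o lner sets $C\subseteq\Lambda_N$ into F\o lner sets $C+Q_N\subseteq\OK$ of size $|Q_N|\,|C|$, identify $\bigvee_{c\in C}S_c^{-1}\mathcal P_{Q_N}$ with $\bigvee_{a\in C+Q_N}S_a^{-1}\mathcal P$, rescale, and finish with \ref{itm:E6}; the only difference is that you fix the concrete cubes $Q_{Nn}$ and state explicitly the Ornstein--Weiss independence of the F\o lner sequence, which the paper uses tacitly when it replaces its fixed Minkowski boxes by the sets $E_n$.
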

\begin{proof}
Since $\mathcal P$ generates for $S$ and $Q_N$ tiles $\OK$ under $\Lambda_N$,
$\bigvee_{c\in\Lambda_N}S_c^{-1}\mathcal P_{Q_N}=\bigvee_{a\in\OK}S_a^{-1}\mathcal P$,
so $\mathcal P_{Q_N}$ generates for $S^{(N)}$. If $(C_n)$ is a F\o lner sequence in
$\Lambda_N$, then $E_n:=\bigsqcup_{c\in C_n}(Q_N+c)$ is F\o lner in $\OK$ with
$|E_n|=|Q_N|\,|C_n|$, and $\bigvee_{c\in C_n}S_c^{-1}\mathcal P_{Q_N}
=\bigvee_{a\in E_n}S_a^{-1}\mathcal P$. Hence
\begin{equation}\label{eq:blocking}
h_\nu(S^{(N)},\mathcal P_{Q_N}\mid\Ftheta)
=\lim_n\frac{1}{|C_n|}H_\nu\Big(\textstyle\bigvee_{a\in E_n}S_a^{-1}\mathcal P\,\Big|\,\Ftheta\Big)
=|Q_N|\,h_\nu(S,\mathcal P\mid\Ftheta),
\end{equation}
and $h_\nu(S,\mathcal P\mid\Ftheta)=h_\nu(S\mid\Ftheta)$ by \ref{itm:E6},
since $\mathcal P$ generates.
\end{proof}

\begin{lemma}[Two entropy inequalities]\label{lem:twoineq}
For $c\in\Lambda_N$ define the \emph{super-symbols}
$\xi_c(x):=(S_cx)|_{Q_N}\in\{0,1\}^{Q_N}$, so that $\xi_c$ carries the same
information as $x|_{Q_N+c}$ and $(\xi_c)_{c\in\Lambda_N}$ is a
$\Lambda_N$-stationary process relative to the $S$-invariant $\Ftheta$. Then:
\begin{enumerate}[label=\textup{(\alph*)},leftmargin=2.2em]
  \item one has $h_\nu(S^{(N)},\mathcal P_{Q_N}\mid\Ftheta)\le H_\nu(\xi_0\mid\Ftheta)$,
  with equality if and only if the family $(\xi_c)_{c\in\Lambda_N}$ is
  conditionally independent given $\Ftheta$;
  \item one has $H_\nu(\xi_0\mid\Ftheta)\le N^d D$ \textup{(bits)}, with equality if and
  only if, conditionally on $\Ftheta$, the symbol $\xi_0$ is uniformly distributed
  on $\{0,1\}^{A_g\cap Q_N}$.
\end{enumerate}
\end{lemma}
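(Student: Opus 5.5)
Both parts are finite-block computations: (a) uses conditional subadditivity along a Følner sequence in $\Lambda_N$, and (b) counts the patterns allowed by Lemma~\ref{lem:forbidden}.

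\textbf{Part (a).} Take a Følner sequence $(C_n)$ in $\Lambda_N$, for definiteness $C_n=\iota(N\{0,\dots,n-1\}^d)$. By definition
\[
h_\nu(S^{(N)},\mathcal P_{Q_N}\mid\Ftheta)=\lim_n\frac1{|C_n|}H_\nu\big((\xi_c)_{c\in C_n}\mid\Ftheta\big).
\]
Conditional subadditivity \ref{itm:E4}, applied inductively, gives
\[
H_\nu\big((\xi_c)_{c\in C_n}\mid\Ftheta\big)\le\sum_{c\in C_n}H_\nu(\xi_c\mid\Ftheta).
\]
Since $\xi_c=\xi_0\circ S_c$, $\nu$ is $S$-invariant and $S_c^{-1}\Ftheta=\Ftheta$, we have $H_\nu(\xi_c\mid\Ftheta)=H_\nu(\xi_0\mid\Ftheta)$. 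Dividing by $|C_n|$ gives the inequality.

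For the equality case, order $\Lambda_N$ lexicographically via $\iota$. Use the single-site argument of Lemma~\ref{lem:site} with super-symbols in place of $x_0$ (the alphabet is finite, so the same chain-rule and Følner reasoning applies). This gives
\[
h_\nu(S^{(N)},\mathcal P_{Q_N}\mid\Ftheta)=H_\nu\big(\xi_0\mid\sigma(\xi_c:c\prec0)\vee\Ftheta\big).
\]
Equality with $H_\nu(\xi_0\mid\Ftheta)$ forces $\xi_0$ to be conditionally independent of its past given $\Ftheta$, by the equality case in \ref{itm:E4} combined with \ref{itm:E5}. Stationarity then gives the same for every $\xi_c$ relative to $(\xi_{c'})_{c'\prec c}$. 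The chain rule yields that every finite family $(\xi_{c_1},\dots,\xi_{c_m})$ has conditional law equal to the product of the marginals, i.e.\ conditional independence. The converse is immediate: under conditional independence the chain rule turns the block entropies into exact sums.

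\textbf{Part (b).} Let $g=\theta(x)$. By Lemma~\ref{lem:forbidden}, given $\Ftheta$ the symbol $\xi_0$ is a.s.\ supported on the set of patterns vanishing off $A_g\cap Q_N$, which has $2^{|A_g\cap Q_N|}$ elements. The fibrewise bound \ref{itm:E1} (maximum entropy of a distribution on a finite set, attained only by the uniform law) then gives
\[
H_\nu(\xi_0\mid\Ftheta)\le\int|A_{\theta(x)}\cap Q_N|\,d\nu(x)=\sum_{a\in Q_N}\nu(\{a\in A_g\}).
\]
By $\theta_*\nu=P$ and translation invariance of $P$, each term equals $P(\{0\in A_g\})=D$, as in Lemma~\ref{lem:forbidden}. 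The sum is therefore $N^dD$. Equality holds iff the conditional law is uniform on $\{0,1\}^{A_g\cap Q_N}$ for $P$-a.e.\ $g$.

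\textbf{Main obstacle.} The only delicate point is the equality case of (a). Passing from ``equality in the limit'' to exact conditional independence needs the past-conditioning reformulation rather than the raw subadditivity bound, since an $o(|C_n|)$ defect would otherwise be invisible. I would handle it exactly as in the proof of Lemma~\ref{lem:site}, using \ref{itm:E5} to identify the limit with the conditional entropy given the full lexicographic past.
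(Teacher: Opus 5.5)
Your proposal is correct. Part (b) and the inequality in (a) are essentially the paper's argument. The equality case of (a), however, takes a genuinely different route.

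The paper uses no order on $\Lambda_N$. Write $H(V)$ for the conditional entropy of $(\xi_c)_{c\in V}$ given $\Ftheta$, and $V_k$ for the cube of side $k$. The paper tiles $V_{mk}$ exactly by $m^d$ translates of $V_k$, so subadditivity gives $h\le H(V_k)/|V_k|$ for every fixed $k$. If $h=H(V_1)$, this squeezes $H(V_k)=\sum_{c\in V_k}H_\nu(\xi_c\mid\Ftheta)$ exactly at each finite scale. The chain rule together with the equality case of \ref{itm:E4} then turns this into conditional independence of $(\xi_c)_{c\in V_k}$. So your worry that subadditivity cannot detect an $o(|C_n|)$ defect is too pessimistic: once the F\o lner sets tile one another, the bound holds scale by scale and no defect can hide.

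Your route instead rederives $h=H_\nu\big(\xi_0\mid\sigma(\xi_c:c\prec0)\vee\Ftheta\big)$ by rerunning Lemma~\ref{lem:site} on super-symbols, and this works. In fact \ref{itm:E2} alone pushes the equality down to finitely many past symbols, where \ref{itm:E4} applies; \ref{itm:E5} is not needed there. Stationarity via \eqref{eq:cov}, a $\pi$--$\lambda$ argument, and the tower property then give joint conditional independence.

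What each buys:
\begin{itemize}
\item Your version is short once Lemma~\ref{lem:site} is available. It needs the F\o lner property and an invariant order, but not exact self-tiling of the F\o lner sets.
\item The paper's version is order-free and martingale-free. That is what keeps the second proof ``block by block'' and independent of the coordinate-by-coordinate first proof, as the Remark closing Section~\ref{sec:induce} claims. With your version of (a), the second proof would quietly reuse the lexicographic-past machinery of the first.
\end{itemize}
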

\begin{proof}
(a) This is the relative form of the fact that the mean entropy of a stationary
process is at most its single-symbol entropy, with equality exactly in the
conditionally independent case; see~\cite{OW,WZ}.\footnote{A
self-contained argument: for finite $V\subseteq\Lambda_N$ set
$H(V):=H_\nu\big(\bigvee_{c\in V}\sigma(\xi_c)\mid\Ftheta\big)$. Relative
stationarity and \ref{itm:E4} give $H(V+c)=H(V)$ and 
$H_\nu(\xi\vee\zeta\mid\Ftheta)\le H_\nu(\xi\mid\Ftheta)+H_\nu(\zeta\mid\Ftheta)$, yielding $H(V\cup V')\le H(V)+H(V')$ for disjoint $V,V'$. Let
$V_k\subseteq\Lambda_N$ be the cube of side $k$, so $(V_k)$ is F\o lner in
$\Lambda_N$ and $h:=h_\nu(S^{(N)},\mathcal P_{Q_N}\mid\Ftheta)=\lim_{k\to\infty}H(V_k)/|V_k|$.
Since $V_{mk}$ is tiled exactly by $m^d$ translates of $V_k$, subadditivity and
stationarity give $H(V_{mk})\le m^dH(V_k)$; computing the limit along
$(V_{mk})_{m\ge1}$ yields $h\le H(V_k)/|V_k|$ for every $k$, and in particular
$h\le H(V_1)=H_\nu(\xi_0\mid\Ftheta)$. Suppose $h=H(V_1)$. For each $k$,
$H(V_k)\ge|V_k|\,h=|V_k|\,H(V_1)$, while tiling $V_k$ by singletons gives
$H(V_k)\le\sum_{c\in V_k}H_\nu(\xi_c\mid\Ftheta)=|V_k|\,H(V_1)$; hence
$H(V_k)=\sum_{c\in V_k}H_\nu(\xi_c\mid\Ftheta)$. Now, by the chain rule \ref{itm:E3},
$H_\nu(\xi\vee\zeta\mid\Ftheta)=H_\nu(\xi\mid\Ftheta)+H_\nu(\zeta\mid\sigma(\xi)\vee\Ftheta)$,
and $H_\nu(\zeta\mid\sigma(\xi)\vee\Ftheta)=H_\nu(\zeta\mid\Ftheta)$ if and only
if the conditional mutual information $I_\nu(\xi;\zeta\mid\Ftheta)$ vanishes,
i.e.\ $\xi$ and $\zeta$ are conditionally independent given $\Ftheta$ (the
equality case of \ref{itm:E4}). Iterating
along an enumeration of $V_k$, the additivity of $H(V_k)$ is equivalent to the
conditional independence of $(\xi_c)_{c\in V_k}$ given $\Ftheta$. Since every
finite subset of $\Lambda_N$ lies in a translate of some $V_k$, equality for all
$k$ amounts to conditional independence of the whole family; the converse
direction (independence implies additivity, hence $h=H(V_1)$) is immediate.}

(b) By Lemma~\ref{lem:forbidden} the coordinates of
$\xi_0$ indexed by $Q_N\setminus A_g$ are $\Ftheta$-measurably $0$, so given
$\Ftheta$ the symbol $\xi_0$ takes values in $\{0,1\}^{A_g\cap Q_N}$; therefore
(applying \ref{itm:E1}  conditionally)
\begin{equation}\label{eq:xi-bound}
H_\nu(\xi_0\mid\Ftheta)\le\mathbb E_P\big|A_g\cap Q_N\big|
=\sum_{a\in Q_N}P(\{a\in A_g\})=N^dD,
\end{equation}
where $P(\{a\in A_g\})=P\big(\bigcap_\ell\{g_\ell\neq-a\}\big)
=\prod_{\bb\in\bB}(1-1/N(\bb))=D$ for every $a\in\OK$, the coordinate events
being independent under the product measure $P$,
with equality precisely when the conditional law is uniform on that cube of
allowed coordinates.
\end{proof}

\begin{proof}[Second proof of Proposition~\ref{prop:rigidity}]
Fix $N\ge1$. Combining Lemmata~\ref{lem:block} and~\ref{lem:twoineq},
\begin{equation}\label{eq:chain}
|Q_N|\,h_\nu(S\mid\Ftheta)=h_\nu(S^{(N)},\mathcal P_{Q_N}\mid\Ftheta)
\le H_\nu(\xi_0\mid\Ftheta)\le N^dD=|Q_N|\,D,
\end{equation}
so $h_\nu(S\mid\Ftheta)\le D$.

Suppose now $h_\nu(S\mid\Ftheta)=D$. Then both inequalities in \eqref{lem:twoineq} are equalities, for
every $N$. By Lemma~\ref{lem:twoineq}(b), conditionally on $\Ftheta$ the
super-symbol $\xi_0$ is uniform on $\{0,1\}^{A_g\cap Q_N}$; by relative
stationarity (the conditional law of $\xi_c$ at phase $g$ equals the
conditional law of $\xi_0$ at phase $T_cg$, and $A_{T_cg}=A_g-c$) each
$x|_{Q_N+c}$ is, conditionally on $\Ftheta$, uniform on
$\{0,1\}^{A_g\cap(Q_N+c)}$. By Lemma~\ref{lem:twoineq}(a) the family
$(\xi_c)_{c\in\Lambda_N}$ is conditionally independent given $\Ftheta$. Because the
cubes $\{Q_N+c\}_{c\in\Lambda_N}$ tile $\OK$, these two facts together say that,
conditionally on $\Ftheta$, the coordinates $(x_a)_{a\in A_g}$ are independent
fair coins while $x_a=0$ for $a\notin A_g$. This is exactly the conditional law of
$\kappa$ given $\Ftheta$; since $\theta_*\nu=P=\theta_*\kappa$, uniqueness of
disintegration yields $\nu=\kappa$.

Conversely, as in the first proof of Proposition~\ref{prop:rigidity}, $\kappa$ attains the bound, i.e.
$h_\kappa(S)=h_\kappa(S\mid\Ftheta)=D$.
\end{proof}

\begin{remark}
For $\OK=\Z$ the subgroups $\Lambda_N=N\Z$ recover Peckner's induced first-return
structure~\cite{Pe}, and Lemma~\ref{lem:twoineq}(a) is precisely the statement
that the maximal measure splits as a direct product over the base; thus
Lemmata~\ref{lem:block}--\ref{lem:twoineq} are the exact-tiling form of
induce-and-split, with the periodic cube tiling playing the role of the
first-return map and the conditional independence of tiles playing the role of
the direct-product splitting. 
Finally, although both proofs ultimately rest on the maximality of
entropy of a fair-coin process, they are organised differently: the first proof
(Section~\ref{sec:relent}) maximises \emph{coordinate by coordinate} along a
fixed order, whereas this proof maximises \emph{block by block} and reads off the
relatively independent structure from the equality case of subadditivity \ref{itm:E4}.
\end{remark}

\section{Proof of the main theorem and consequences}\label{sec:proof}

\begin{proof}[Proof of Theorem~\ref{thm:main}]
\emph{Existence.} By \ref{itm:F5}, $X_{\bB}$ has a measure of maximal entropy; and by either
proof of Proposition~\ref{prop:rigidity}, $\kappa$ satisfies
$h_\kappa(S)=h_\kappa(S\mid\Ftheta)=D=\htop(X_{\bB})$ (using
Proposition~\ref{prop:abramov} and \ref{itm:F1}), so $\kappa$ is one.

\emph{Uniqueness.} Let $\nu$ be a measure of maximal entropy. By \ref{itm:F5},
almost every ergodic component of $\nu$ is again maximal,
so it suffices to show every \emph{ergodic} maximal $\nu$ equals $\kappa$. By \ref{itm:F2}
such a $\nu$ is concentrated on $Y$; by Proposition~\ref{prop:haar},
$\theta_*\nu=P$; by Proposition~\ref{prop:abramov},
$h_\nu(S\mid\Ftheta)=h_\nu(S)=\htop(X_{\bB})=D$; and by
Proposition~\ref{prop:rigidity}, $\nu=\kappa$. A general measure of maximal
entropy is, by \ref{itm:F5}, the barycentre of ergodic measures of maximal entropy, each
equal to $\kappa$ by the above; hence it equals $\kappa$, and the measure of
maximal entropy is unique.

\emph{Singularity.} The conditional law of $\kappa$ given
$\Ftheta$ is fair coins on $A_g$ and zeros elsewhere. The measures $\kappa$ and $\nu_\eta$ are distinct ergodic measures
(positive versus zero entropy), hence mutually singular.
\end{proof}

\begin{proof}[Proof of Corollary~\ref{cor:transfer}]
By~\cite{ADK}, the $k$-free number-field system is a special case of the present
setting, and the $k$-free and $\bB$-free lattice-point systems are topologically
conjugate to instances of it. Intrinsic ergodicity is invariant under topological conjugacy, so it transfers
from Theorem~\ref{thm:main} to all these systems.
\end{proof}

The uniqueness in Theorem~\ref{thm:main} upgrades, by a standard argument, to
an equidistribution statement: the local statistics of a \emph{uniformly
random} admissible pattern on a large box are those of $\kappa$. For a finite
set $E\subseteq\OK$ and $w\in\{0,1\}^E$, write
$[w]:=\{x\in X_{\bB}:\ x|_E=w\}$ for the corresponding cylinder set, and let
$\mathcal L_n:=\{x|_{F_n}:\ x\in X_{\bB}\}\subseteq\{0,1\}^{F_n}$ be the set of
admissible patterns on the F\o lner box $F_n$.

\begin{corollary}[Equidistribution of uniformly random admissible patterns]\label{cor:equid}
For every finite $E\subseteq\OK$ and every $w\in\{0,1\}^E$,
\begin{equation}\label{eq:equid}
\lim_{n\to\infty}\ \frac{1}{|\mathcal L_n|}\sum_{u\in\mathcal L_n}
\frac{\#\big\{a\in F_n:\ E+a\subseteq F_n\ \text{and}\
u(e+a)=w(e)\ \text{for all }e\in E\big\}}{|F_n|}
\;=\;\kappa([w]).
\end{equation}
That is, sampling an admissible pattern on $F_n$ uniformly at random and
reading it in a uniformly placed window, the observed statistics converge to
those of the measure of maximal entropy.
\end{corollary}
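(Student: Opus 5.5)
The plan is the standard construction of a measure of maximal entropy from uniform distributions on admissible patterns, followed by intrinsic ergodicity (Theorem~\ref{thm:main}). For each $n$ let $\lambda_n$ be the uniform probability measure on $\mathcal L_n$. Since $X_{\bB}$ is a subshift, every $u\in\mathcal L_n$ extends to some point of $X_{\bB}$. Choose a Borel map $u\mapsto x^{(u)}\in X_{\bB}$ with $x^{(u)}|_{F_n}=u$; any choice works. Define
\[
\mu_n:=\frac{1}{|\mathcal L_n|}\sum_{u\in\mathcal L_n}\frac{1}{|F_n|}\sum_{a\in F_n}\delta_{S_a x^{(u)}}.
\]
For fixed finite $E$ and $w$, the left side of \eqref{eq:equid} differs from $\mu_n([w])$ only through the terms with $E+a\not\subseteq F_n$. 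By the F\o lner property these form a fraction $o(1)$ of $F_n$. So it suffices to prove $\mu_n\to\kappa$ weak-$*$. Cylinders are clopen, so weak-$*$ convergence gives convergence on $[w]$.

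By compactness, it is enough to show that every weak-$*$ limit point $\mu$ of $(\mu_n)$ equals $\kappa$. First, $\mu$ is $S$-invariant, because $\|S_b\mu_n-\mu_n\|\le|F_n\triangle(F_n+b)|/|F_n|\to0$. Second, $h_\mu(S)\ge\htop(X_{\bB})$. I would prove this by the Misiurewicz argument from the proof of the variational principle, adapted to amenable groups as in \cite{MOP} or \cite[Chapter~9]{KL}.

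\begin{itemize}
\item Start from $\log_2|\mathcal L_n|=H_{\lambda_n}(\mathcal P_{F_n})$, where $\mathcal P_{F_n}$ is the partition by the pattern on $F_n$. Since $\mathcal P$ generates, $\log_2|\mathcal L_n|/|F_n|\to\htop(X_{\bB})=D$ by \ref{itm:F1}.
\item Fix a finite box $Q=Q_N$ as in Section~\ref{sec:induce}. Cover $F_n$ by the translates $Q+c$, $c\in\Lambda_N+t$, for each of the $N^d$ offsets $t\in Q$, discarding boundary tiles.
\item Apply subadditivity \ref{itm:E4} for each offset and average over $t$. This gives
\[
\log_2|\mathcal L_n|\le\frac{|F_n|}{|Q|}\,H_{\mu_n}(\mathcal P_Q)+o(|F_n|).
\]
The key point is concavity of Shannon entropy: averaging the shifted pushforwards of $\lambda_n$ produces $\mu_n$.
\item Let $n\to\infty$ along the subsequence. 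The atoms of $\mathcal P_Q$ are clopen, so $H_{\mu_n}(\mathcal P_Q)\to H_\mu(\mathcal P_Q)$. This yields $D\le H_\mu(\mathcal P_Q)/|Q|$ for every $N$. Letting $N\to\infty$, the right side converges to $h_\mu(S,\mathcal P)=h_\mu(S)$ by \ref{itm:E6}.
\end{itemize}

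Combining with the variational principle \eqref{variational-principle}, $h_\mu(S)=D$, so $\mu$ is a measure of maximal entropy. Theorem~\ref{thm:main} then forces $\mu=\kappa$.

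The main obstacle is making the Misiurewicz step precise for the F\o lner sets $F_n$, the Minkowski boxes of \cite{ADK}, rather than cubes. I would handle it as follows. Fix $Q$ and tile $\OK$ by $\Lambda_N$-translates of $Q$ with each of the $N^d$ offsets. The tiles lying inside $F_n$ cover all but $O(|\partial_Q F_n|)=o(|F_n|)$ points. The uncovered points contribute at most one bit each by \ref{itm:E1}. Averaging over offsets turns the sum of tile entropies into $|F_n|/|Q|$ times the average over $a\in F_n$ of $H_{S_a\lambda_n}(\mathcal P_Q)$, up to $o(|F_n|)$. Concavity then bounds this average by $H_{\mu_n}(\mathcal P_Q)$.

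A small care point is that $\mu_n$ depends on the chosen extensions $x^{(u)}$, and the shifted patterns near the boundary of $F_n$ involve coordinates outside $F_n$. Both effects only touch boundary terms, which vanish in density, so the choice of extension is immaterial.
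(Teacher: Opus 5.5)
Your proposal is correct and follows essentially the same route as the paper. Both arguments use the same F\o lner-averaged uniform measures $\mu_n$ on extensions of admissible patterns, the same boundary estimate reducing \eqref{eq:equid} to weak-$*$ convergence, and then $S$-invariance of limit points, the Misiurewicz lower bound $h_\mu(S)\ge\htop(X_{\bB})$, and uniqueness from Theorem~\ref{thm:main}. The only difference is that you actually prove the Misiurewicz bound (offset-averaged tilings by $\Lambda_N$-translates of $Q_N$, subadditivity, and concavity of Shannon entropy), whereas the paper only cites it from \cite{MOP} and \cite[Chapter~9]{KL} in a footnote.
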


\begin{proof}
For each $u\in\mathcal L_n$ fix $x_u\in X_{\bB}$ with $x_u|_{F_n}=u$, let
$\mu_n$ be the uniform measure on $\{x_u:u\in\mathcal L_n\}$, and set
$\nu_n:=|F_n|^{-1}\sum_{a\in F_n}(S_a)_*\mu_n$. Since cylinders are clopen and
$E$ is finite, the left-hand side of \eqref{eq:equid} differs from
$\nu_n([w])$ by at most the proportion of $a\in F_n$ with
$E+a\not\subseteq F_n$, which is $o(1)$ by the F\o lner property. Let $\nu$ be
any weak-$*$ limit point of $(\nu_n)_{n\geq1}$. F\o lner averaging makes $\nu$ an 
$S$-invariant measure. We have the  Misiurewicz-type lower bound\footnote{
(Half of) the proof of the variational principle
for amenable group actions (see \cite{MOP} and \cite[Chapter~9]{KL}) gives
$h_\nu(S)\geq\limsup_{j\to\infty}\frac{1}{|F_{n_j}|}H_{\mu_{n_j}}\!\left(\bigvee_{a\in F_{n_j}}S^{-1}_a\mathcal P\right)$ where $\mathcal P$ is a generating clopen partition and $\nu$ is the weak-$*$ limit of $(\nu_{n_j})_{j\geq1}$. Since distinct patterns lie in distinct atoms of $\bigvee_{a\in F_{n_j}}S^{-1}_a\mathcal P$ and $\mu_{n_j}$ is uniform on the set of points in $X_{\bB}$ realizing the $|\mathcal L_{n_j}|$ distinct admissible patterns, the Shannon entropy inside this limsup equals $\log_2|\mathcal L_{n_j}|$. The limsup in \eqref{eq:misiurewicz} is in fact a limit and  equals the topological entropy as in~\cite{ADK}, thus agrees with the limsup along the subsequence $(n_j)_{j\geq1}$
} 
\begin{equation}\label{eq:misiurewicz}
h_\nu(S)\ \ge\ \limsup_{n\to\infty}\frac{1}{|F_n|}\log_2|\mathcal L_n|
\ =\ \htop(X_{\bB},S).
\end{equation} Hence $\nu$ is a measure of maximal
entropy, so $\nu=\kappa$ by Theorem~\ref{thm:main}. The limit point being
unique, we get that $\kappa$ is the weak-$*$ limit of $(\nu_n)_{n\geq1}$. Finally, since the indicator $\mathbf 1_{[w]}$ is continuous on
$X_{\bB}$, we get that  $\nu_n([w])\to\kappa([w])$ as $n\to\infty$, which is \eqref{eq:equid}.
\end{proof}

\begin{remark}
Theorem~\ref{thm:main} makes precise the coexistence, in $\bB$-free systems, of
order and chaos. Measure-theoretically under $\nu_\eta$ the system is a
zero-entropy rotation; topologically it is proximal with trivial maximal
equicontinuous factor; yet its dynamics is rich enough to carry a unique
positive-entropy maximal measure $\kappa$, a relatively independent (Bernoulli)
extension of the very rotation that is invisible as a topological factor. The
distinguished arithmetic point $\eta$ is generic for the rotation measure, not
for $\kappa$.
\end{remark}

\begin{remark}
It would be natural to determine the full simplex of invariant measures and the
equilibrium states of $(X_{\bB},S)$ in the number-field setting, paralleling the
one-dimensional results of~\cite{DKS,KLW}.
\end{remark}

\section*{Acknowledgements}
The author gratefully acknowledges the support of the Natural Sciences and Engineering Research Council of Canada  through the NSERC Discovery Grants  RGPIN-2022-04330.\\
During the preparation of this work the author used Claude (Anthropic, Opus 4.8 and Fable 5 models, accessed via claude.ai) to draft and refine exposition and LaTeX and to assist with literature and reference searches. All mathematical results, proofs, and conclusions are the author's own. The author has verified the full text and is solely responsible for it.

\bigskip

\noindent\textsc{Department of Mathematics and Statistics, Queen's University,
Kingston, Ontario, Canada.}

\smallskip

\noindent\textit{Email address}: \texttt{francesco.cellarosi@queensu.ca}

\end{document}